\newtheorem{theorem}{Theorem}[section]
\newtheorem{lemma}[theorem]{Lemma}
\newtheorem{corollary}[theorem]{Corollary}
\theoremstyle{definition}
\newtheorem{definition}[theorem]{Definition}
\newtheorem{proposition}[theorem]{Proposition}
\begin{document}

\title[The mapping $i_{2}$ on the free paratopological groups]
{The mapping $i_{2}$ on the free paratopological groups}

\author{Fucai Lin*}
\address{(Fucai Lin): School of mathematics and statistics,
Minnan Normal University, Zhangzhou 363000, P. R. China}
\email{linfucai2008@aliyun.com}

\author{Chuan Liu}
\address{(Chuan Liu): Department of Mathematics,
Ohio University Zanesville Campus, Zanesville, OH 43701, USA}
\email{liuc1@ohio.edu}

\thanks{The first author is supported by the NSFC (Nos.11201414, 11471153), the Natural Science Foundation of Fujian Province (No.2012J05013) of China, the Training Programme Foundation for Excellent Youth Researching Talents of Fujian¡¯s Universities (JA13190) and the foundation of The Education Department of Fujian Province(No. JA14200).}

\thanks{*corresponding author}

\keywords{Free paratopological groups; quotient mappings; closed mappings; finest quasi-uniformity.}
\subjclass[2000]{primary 22A30; secondary 54D10; 54E99; 54H99}

\begin{abstract}
Let $FP(X)$ be the free paratopological group over a topological space $X$. For each non-negative integer $n\in\mathbb{N}$, denote by $FP_{n}(X)$ the subset of $FP(X)$ consisting of all words of reduced length at most $n$, and $i_{n}$ by the natural mapping from $(X\bigoplus X^{-1}\bigoplus\{e\})^{n}$ to $FP_{n}(X)$. In this paper, we mainly improve some results of A.S. Elfard and P. Nickolas's [On the topology of free paratopological groups. II, Topology Appl.,  160(2013), 220--229.]. The main result is that the natural mapping $i_{2}: (X\bigoplus X_{d}^{-1}\bigoplus\{e\})^{2}\longrightarrow FP_{2}(X)$ is a closed mapping
if and only if every neighborhood $U$ of the diagonal $\Delta_{1}$ in $X_{d}\times X$ is a member of the finest quasi-uniformity on $X$, where $X$ is a $T_{1}$-space and $X_{d}$ denotes $X$ when equipped with the discrete topology in place of its given topology.
\end{abstract}

\maketitle

\section{Introduction}
In 1941, free topological groups were introduced by A.A. Markov in \cite{MA} with the clear idea of extending the well-known construction of a free group from group theory to topological groups. Now, free topological groups have become a powerful tool of study in the theory of topological groups and serve as a source of various examples and as an instrument for proving new theorems, see \cite{AT2008}.

As in free topological groups, S. Romaguera, M. Sanchis and M.G. Tkachenko in \cite{RS} defined free paratopological groups and proved the existence of the free paratopological group $FP(X)$ for every topological space $X$. Recently, A.S. Elfard, F.C. Lin, P. Nickolas and N.M. Pyrch have investigated some properties of free paratopological groups, see \cite{EN2012, EN2013, LFC2012, LF, PN2006, PN2011}.

For each non-negative integer $n\in\mathbb{N}$, denote by $FP_{n}(X)$ the subset of $FP(X)$ consisting of all words of reduced length at most $n$, and $i_{n}$ by the natural mapping from $(X\bigoplus X^{-1}\bigoplus\{e\})^{n}$ to $FP_{n}(X)$. In this paper, we mainly improve some results of A.S. Elfard and P. Nickolas's. The main result is that the natural mapping $i_{2}: (X\bigoplus X_{d}^{-1}\bigoplus\{e\})^{2}\longrightarrow FP_{2}(X)$ is a closed mapping
if and only if every neighborhood $U$ of the diagonal $\Delta_{1}$ in $X_{d}\times X$ is a member of the finest quasi-uniformity on $X$,  where $X$ is a $T_{1}$-space and $X_{d}$ denotes $X$ when equipped with the discrete topology in place of its given topology.

\section{Preliminaries}
All mappings are continuous. We denote by $\mathbb{N}$ and $\mathbb{Z}$ the sets of all natural
numbers and the integers, respectively. The letter $e$
denotes the neutral element of a group. Readers may consult
\cite{AT2008, E1989, Gr1984, F1982} for notations and terminology not
explicitly given here.

Recall that a {\it topological group} $G$ is a group $G$ with a
(Hausdorff) topology such that the product mapping of $G \times G$ into
$G$ is jointly continuous and the inverse mapping of $G$ onto itself
associating $x^{-1}$ with an arbitrary $x\in G$ is continuous. A {\it
paratopological group} $G$ is a group $G$ with a topology such that
the product mapping of $G \times G$ into $G$ is jointly continuous.

\begin{definition}\cite{RS}
Let $X$ be a subspace of a paratopological group $G$. Assume that
\begin{enumerate}
\item The set $X$ generates $G$ algebraically, that is $<X>=G$;

\item  Each continuous mapping $f: X\rightarrow H$ to a paratopological group $H$ extends to a continuous homomorphism $\hat{f}: G\rightarrow H$.
\end{enumerate}
Then $G$ is called the {\it Markov free paratopological group on} $X$ and is denoted by $FP(X)$.
\end{definition}

Again, if all the groups in the above definitions are Abelian, then we get the definition of the {\it Markov free Abelian paratopological group on} $X$ which will be denoted by $AP(X)$.

By \cite{RS}, $FPX$ and $AP(X)$ exist for every space $X$ and the underlying abstract groups of $FPX$ and $AP(X)$ are the free groups on the underlying set of the topological space $X$ respectively. We denote these abstract groups by $FP_{a}(X)$ and $AP_{a}(X)$ respectively.

Since $X$ generates the free group $FP_{a}(X)$, each element $g\in FP_{a}(X)$ has the form $g=x_{1}^{\varepsilon_{1}}\cdots x_{n}^{\varepsilon_{n}}$, where $x_{1}, \cdots, x_{n}\in X$ and $\varepsilon_{1}, \cdots, \varepsilon_{n}=\pm 1$. This word for $g$ is called {\it reduced} if it contains no pair of consecutive symbols of the form $xx^{-1}$ or $x^{-1}x$. It follow that if the word $g$ is reduced and non-empty, then it is different from the neutral element of $FP_{a}(X)$. For every non-negative integer $n$, denote by $FP_{n}(X)$ and $AP_{n}(X)$ the subspace of paratopological groups $FP(X)$ and $AP(X)$ that consists of all words of reduced length $\leq n$ with respect to the free basis $X$, respectively.

Let $X$ be a $T_{1}$-space. For each $n\in\mathbb{N}$, denote by $i_{n}$ the multiplication mapping from $(X\bigoplus X_{d}^{-1}\bigoplus\{e\})^{n}$ to $B_{n}(X)$, $i_{n}(y_{1}, \cdots, y_{n})=y_{1}\cdot \cdots\cdot y_{n}$ for every point $(y_{1}, \cdots, y_{n})\in (X\bigoplus X_{d}^{-1}\bigoplus\{e\})^{n}$, where $X_{d}^{-1}$ denotes the set $X^{-1}$ equipped with the discrete topology and $B_{n}(X)$ denotes $FP_{n}(X)$ or $AP_{n}(X)$.

By a {\it quasi-uniform space} $(X, \mathscr{U})$ we mean the natural analog of a {\it uniform space} obtained by dropping the symmetry axiom. For each quasi-uniformity $\mathscr{U}$ the filter  $\mathscr{U}^{-1}$ consisting of the inverse relations $U^{-1}=\{(y, x): (x, y)\in U\}$ where $U\in\mathscr{U}$ is called the {\it conjugate quasi-uniformity} of $\mathscr{U}$.

Let $X$ be a topological space. Then $X_{d}$ denotes $X$ when equipped with the discrete topology in place of its given topology. We denote the diagonals of $X_{d}\times X$ and $X\times X_{d}$ by $\Delta_{1}$ and $\Delta_{2}$, respectively.
In \cite{PN2006}, the authors proved that $X^{-1}$ is discrete in free paratopological group $FP(X)$ and $AP(X)$ over $X$ if $X$ is a $T_{1}$-space. We denote the sets $\{(x^{-1}, y): (x, y)\in X\times X\}$ and $\{(x, y^{-1}): (x, y)\in X\times X\}$ by $\Delta_{1}^{\ast}$ and $\Delta_{2}^{\ast}$, respectively.

\section{Main results}
\begin{theorem}\label{t0}\cite{EN2013}
If $X$ is a $T_{1}$-space, then the mapping $$i_{2}\mid _{i_{2}^{-1}(FP_{2}(X)\setminus FP_{1}(X))}: i_{2}^{-1}(FP_{2}(X)\setminus FP_{1}(X))\longrightarrow FP_{2}(X)\setminus FP_{1}(X)$$ is a homeomorphism.
\end{theorem}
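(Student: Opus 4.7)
The map $i_{2}$ is continuous, being the restriction of the paratopological multiplication $FP(X)\times FP(X)\to FP(X)$ to $(X\oplus X_{d}^{-1}\oplus\{e\})^{2}$; and it is injective on $\Omega:=i_{2}^{-1}(FP_{2}(X)\setminus FP_{1}(X))$ because a reduced word of length two has a unique factorization into its two letters. The plan therefore reduces to showing that $i_{2}|_{\Omega}$ is an open mapping onto $FP_{2}(X)\setminus FP_{1}(X)$.

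To organise the argument I would split $\Omega$ into four ``sign-type'' pieces: $\Omega_{++}=X\times X$, $\Omega_{+-}$ consisting of the pairs $(x,y^{-1})\in X\times X_{d}^{-1}$ with $x\neq y$, $\Omega_{-+}$ consisting of the pairs $(x^{-1},y)\in X_{d}^{-1}\times X$ with $x\neq y$, and $\Omega_{--}=X_{d}^{-1}\times X_{d}^{-1}$. Each $\Omega_{\sigma}$ is open in $\Omega$, since $X_{d}^{-1}$ is clopen in the topological sum and the cancellation diagonals are closed (using that $X$ is $T_{1}$). By the unique factorization of reduced words, $i_{2}$ maps each $\Omega_{\sigma}$ bijectively onto the corresponding image piece $P_{\sigma}\subseteq FP_{2}(X)\setminus FP_{1}(X)$, and these four pieces partition $FP_{2}(X)\setminus FP_{1}(X)$. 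It therefore suffices to verify, for each $\sigma$, that $P_{\sigma}$ is open in $FP_{2}(X)\setminus FP_{1}(X)$ and that $i_{2}|_{\Omega_{\sigma}}$ is a homeomorphism onto $P_{\sigma}$.

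For the three pieces involving $X_{d}^{-1}$, the discreteness of that factor reduces the task to the one-variable case. For instance, on $\Omega_{+-}$ one fixes $y\in X$ and studies the slice map $x\mapsto xy^{-1}$ on $X\setminus\{y\}$. Since left and right multiplication by any fixed element are homeomorphisms of $FP(X)$ (their inverses being multiplications by the inverse element, also continuous), this slice map is a topological embedding of $X\setminus\{y\}$ into $FP(X)$. Gluing these slices together, using that $\{y^{-1}\}$ is open in the discrete subspace $X^{-1}\subseteq FP(X)$ (from \cite{PN2006}), yields the desired homeomorphism and shows that $P_{+-}$ is open in $FP_{2}(X)\setminus FP_{1}(X)$; the cases $\Omega_{-+}$ and $\Omega_{--}$ are analogous.

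The main obstacle is the ``purely positive'' piece $\Omega_{++}=X\times X\to P_{++}=X\cdot X$, since neither coordinate is discrete and left/right multiplication alone does not suffice to recover both factors of a length-two word continuously. To handle this I would exploit the universal property of $FP(X)$: given $(x_{0},y_{0})\in X\times X$ and basic open neighborhoods $U,V$ of $x_{0},y_{0}$ in $X$, construct a continuous map $\phi:X\to H$ into a carefully chosen auxiliary paratopological group $H$ whose extension $\hat\phi:FP(X)\to H$ distinguishes $x_{0}y_{0}$ from length-two positive words whose factors fall outside $U\times V$. The technical core lies in designing $H$ rich enough to ``read off'' both factors of a length-two word, and this is where the freeness of $FP(X)$ on $X$ is essential.
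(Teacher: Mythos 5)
There is a genuine gap. First, note that the paper does not prove this statement at all: it is quoted from \cite{EN2013}, and the proof there is essentially a one-line application of the neighborhood-base description of $FP_2(X)$ recorded in the paper as Theorem~\ref{t9} (from \cite{EN2012}): at a reduced word $x_1^{\epsilon_1}x_2^{\epsilon_2}$ the sets $U_1^{\epsilon_1}U_2^{\epsilon_2}$ (with $U_i$ a neighborhood of $x_i$ when $\epsilon_i=1$ and $U_i=\{x_i\}$ when $\epsilon_i=-1$) form a base, and since reduced length-two words factor uniquely, these basic neighborhoods correspond exactly to the basic open boxes of $(X\oplus X_d^{-1}\oplus\{e\})^2$, giving the homeomorphism on all four sign pieces at once. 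Your decomposition into $\Omega_{++},\Omega_{+-},\Omega_{-+},\Omega_{--}$ and the continuity/injectivity observations are fine, but the rest of your argument does not close.

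Concretely: (i) for the mixed and negative pieces, translations plus discreteness of $X^{-1}$ do not suffice. Right translation by $y^{-1}$ shows that $Xy^{-1}$ is \emph{homeomorphic} to $X$, but not that $(X\setminus\{y\})y^{-1}$ is \emph{open} in $FP_2(X)\setminus FP_1(X)$, nor that the slices over distinct $y$ are separated there; the discreteness of $X^{-1}$ only yields an open $W$ in $FP(X)$ with $W\cap X^{-1}=\{y^{-1}\}$, and $X\cdot W$ intersected with $FP_2(X)\setminus FP_1(X)$ can contain length-two words of other sign types, so the ``gluing'' step is unjustified. The same problem occurs for $\Omega_{--}$: $W W'$ open around $x^{-1}y^{-1}$ need not meet $FP_2(X)\setminus FP_1(X)$ only in $\{x^{-1}y^{-1}\}$. (ii) For the decisive case $\Omega_{++}$ you correctly identify the difficulty but then only state an intention to build an auxiliary paratopological group $H$ that ``reads off'' both factors; no such $H$ is constructed, and constructing one is exactly the nontrivial content of Theorem~\ref{t9} (in \cite{EN2012} this is done via families of quasi-pseudometrics/quasi-uniformities, not by an elementary separating homomorphism). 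As it stands, every case of your argument silently requires the neighborhood-base theorem that you have neither invoked nor reproved, so the proposal is a plan rather than a proof; citing Theorem~\ref{t9} and deducing the claim from unique factorization of reduced length-two words would repair it.
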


\begin{theorem}\label{t9}\cite{EN2012}
Let $X$ be a $T_{1}$-space and let $w=x_{1}^{\epsilon_{1}}x_{2}^{\epsilon_{2}}\cdots x_{n}^{\epsilon_{n}}$ be a reduced word in $FP_{n}(X)$, where $x_{i}\in X$ and $\epsilon_{i}=\pm 1$, for all $i=1, 2, \cdots, n$, and if $x_{i}=x_{i+1}$ for some $i=1, 2, \cdots, n-1$, then $\epsilon_{i}=\epsilon_{i+1}$. Then the collection $\mathscr{B}$ of all sets of the form $U_{1}^{\epsilon_{1}}U_{2}^{\epsilon_{2}}\cdots U_{n}^{\epsilon_{n}}$, where, for all $i=1, 2, \cdots, n$, the set $U_{i}$ is a neighborhood of $x_{i}$ in $X$ when $\epsilon_{i}=1$ and $U_{i}=\{x_{i}\}$ when $\epsilon_{i}=-1$ is a base for the neighborhood system at $w$ in $FP_{n}(X)$.
\end{theorem}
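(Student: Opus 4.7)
The plan is to verify that $\mathscr{B}$ is a neighborhood base at $w$ in $FP_n(X)$ by establishing both containments: every open neighborhood of $w$ contains a member of $\mathscr{B}$, and every member of $\mathscr{B}$ is itself a neighborhood of $w$.

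For the first (easier) containment, let $N$ be an open neighborhood of $w$ in $FP_n(X)$, and write $N = N' \cap FP_n(X)$ where $N'$ is open in $FP(X)$ and contains $w$. Since multiplication in the paratopological group $FP(X)$ is jointly continuous, the $n$-fold product map $\mu_n : FP(X)^n \to FP(X)$ is continuous, so there exist open sets $V_1, \dots, V_n \subseteq FP(X)$ with $x_i^{\epsilon_i} \in V_i$ and $V_1 V_2 \cdots V_n \subseteq N'$. For each $i$ with $\epsilon_i = 1$, set $U_i = V_i \cap X$, which is a neighborhood of $x_i$ in $X$. For each $i$ with $\epsilon_i = -1$, the discreteness of $X^{-1}$ in $FP(X)$ (established in \cite{PN2006}) allows us to shrink $V_i$ so that $V_i \cap X^{-1} = \{x_i^{-1}\}$, and we put $U_i = \{x_i\}$. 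Then $U_1^{\epsilon_1} \cdots U_n^{\epsilon_n} \subseteq V_1 \cdots V_n \cap FP_n(X) \subseteq N$.

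For the harder second containment, the strategy is to first shrink each $U_i$ with $\epsilon_i = 1$ to an open neighborhood $U_i'$ of $x_i$ in $X$ arranged so that every product $u_1^{\epsilon_1} \cdots u_n^{\epsilon_n}$ (with $u_i \in U_i'$, and with $u_i = x_i$ when $\epsilon_i = -1$) is reduced of length exactly $n$. This is possible using the $T_1$ hypothesis: whenever adjacent exponents differ, the reducedness of $w$ forces $x_i \neq x_{i+1}$, and we can remove the offending neighbor from the appropriate $U_i'$ to prevent adjacent cancellation (the only kind that can arise). Consequently $(U_1')^{\epsilon_1} \cdots (U_n')^{\epsilon_n} \subseteq FP_n(X) \setminus FP_{n-1}(X)$. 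I would then invoke (or prove as a supporting lemma) a generalization of Theorem~\ref{t0}: that $FP_{n-1}(X)$ is closed in $FP_n(X)$ and that $i_n$ restricted to the preimage of $FP_n(X) \setminus FP_{n-1}(X)$ is a homeomorphism onto that open complement. Under this homeomorphism, the basic product open set $(U_1')^{\epsilon_1} \times \cdots \times (U_n')^{\epsilon_n}$ around the (unique) preimage $(x_1^{\epsilon_1}, \dots, x_n^{\epsilon_n})$ in $(X \oplus X_d^{-1} \oplus \{e\})^n$ maps to an open subset of $FP_n(X) \setminus FP_{n-1}(X)$, which is therefore also open in $FP_n(X)$ and sits inside the original $U_1^{\epsilon_1} \cdots U_n^{\epsilon_n}$, making the latter a neighborhood of $w$.

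The main obstacle is the generalization of Theorem~\ref{t0} from $n=2$ to arbitrary $n$. Injectivity of $i_n$ on the reduced-length-$n$ locus follows at once from the uniqueness of reduced expressions (any $(y_1,\dots,y_n)$ mapping to $w$ must avoid $e$ and match the $\epsilon$-pattern), but verifying openness there, along with the closedness of $FP_{n-1}(X)$ in $FP_n(X)$, requires a delicate local analysis of how open sets in $FP(X)$ interact with $FP_n(X)$ via multiplication, and repeated use of the discreteness of $X^{-1}$ to pin down the $\epsilon_i = -1$ coordinates. This is where the technical bulk of the proof lies.
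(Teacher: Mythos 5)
First, a remark on the comparison you were asked to make: the paper contains no proof of this statement at all --- Theorem~\ref{t9} is quoted with a citation to \cite{EN2012}, where it is established via an explicit description of the neighbourhood filter of the identity in $FP(X)$ in terms of the universal quasi-uniformity on $X$ (extended quasi-pseudometrics in the style of Graev), followed by translation to $w$. So the only basis for evaluation is the internal soundness of your argument.

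Your first containment is correct, and so is the preliminary shrinking in the second: since a word fails to be reduced only through an adjacent cancellation, and the hypothesis forces $x_i\neq x_{i+1}$ whenever $\epsilon_i\neq\epsilon_{i+1}$, the $T_1$ axiom lets you delete the offending point and land in $FP_n(X)\setminus FP_{n-1}(X)$. The genuine gap is that the entire content of the hard direction is then delegated to an unproven ``generalization of Theorem~\ref{t0}'': that $FP_{n-1}(X)$ is closed in $FP_n(X)$ and that $i_n$ restricted to $i_n^{-1}(FP_n(X)\setminus FP_{n-1}(X))$ is a homeomorphism onto the top stratum. That lemma is not a side technicality --- it is essentially equivalent to the theorem you are proving. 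Indeed, granting Theorem~\ref{t9}, the openness of $i_n$ on the top stratum is immediate (the image of a basic box $(U_1')^{\epsilon_1}\times\cdots\times(U_n')^{\epsilon_n}$ is exactly a basic neighbourhood from $\mathscr{B}$ intersected with the stratum), and this is precisely how \cite{EN2013} obtains the $n=2$ case recorded as Theorem~\ref{t0}: the stratification homeomorphism is \emph{derived from} the neighbourhood-base description, not the reverse. As written, your reduction is therefore circular in substance. To repair it you would need an independent handle on which subsets of $FP(X)$ are open near a word of length $n$ --- which is exactly what the quasi-uniformity machinery of \cite{EN2012} supplies and what your outline never engages with; the ``delicate local analysis'' you defer is the whole proof.
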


\begin{theorem}\label{t2}\cite{EN2012}
Let $X$ be a $T_{1}$-space and let $w=\epsilon_{1}x_{1}+\epsilon_{2}x_{2}+\cdots+\epsilon_{n}x_{n}$ be a reduced word in $AP_{n}(X)$, where $x_{i}\in X$ and $\epsilon_{i}=\pm 1$, for all $i=1, 2, \cdots, n$, and if $x_{i}=x_{j}$ for some $i, j=1, 2, \cdots, n$, then $\epsilon_{i}=\epsilon_{j}$. Then the collection $\mathscr{B}$ of all sets of the form $\epsilon_{1}U_{1}+\epsilon_{2}U_{2}+\cdots+\epsilon_{n}U_{n}$, where, for all $i=1, 2, \cdots, n$, the set $U_{i}$ is a neighborhood of $x_{i}$ in $X$ when $\epsilon_{i}=1$ and $U_{i}=\{x_{i}\}$ when $\epsilon_{i}=-1$ is a base for the neighborhood system at $w$ in $AP_{n}(X)$.
\end{theorem}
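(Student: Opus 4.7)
The plan is to verify the two standard conditions for $\mathscr{B}$ to be a neighborhood base at $w$ in $AP_n(X)$: \emph{(ii)} every neighborhood of $w$ in $AP_n(X)$ contains some $B\in\mathscr{B}$, and \emph{(i)} every $B\in\mathscr{B}$ is itself a neighborhood of $w$.

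Condition (ii) follows from the continuity of the multiplication mapping $i_n:(X\bigoplus X_d^{-1}\bigoplus\{e\})^n\longrightarrow AP_n(X)$ together with the discreteness of $X_d^{-1}$. Given an open neighborhood $W$ of $w$ in $AP_n(X)$, the preimage $i_n^{-1}(W)$ is open in the product space and contains the distinguished preimage $p=(\epsilon_1 x_1,\ldots,\epsilon_n x_n)$. A basic open neighborhood of $p$ may be selected as $\prod_i V_i$, where for $\epsilon_i=+1$ the factor $V_i$ is a neighborhood $U_i$ of $x_i$ in $X$, and for $\epsilon_i=-1$ the factor $V_i$ is the singleton $\{-x_i\}$ (legitimate since $X_d^{-1}$ is discrete). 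Setting $U_i=\{x_i\}$ in the second case, the image $i_n\bigl(\prod_i V_i\bigr)=\epsilon_1U_1+\cdots+\epsilon_nU_n$ is an element of $\mathscr{B}$ contained in $W$.

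For condition (i), my plan is to apply the universal property of $AP(X)$. I would construct a paratopological group topology $\tau$ on the abstract free Abelian group $AP_a(X)$ whose neighborhood system at each reduced word $w'=\sum_k\delta_k z_k$ has as a base the sets $\sum_k\delta_k V_k$, with $V_k$ a neighborhood of $z_k$ in $X$ when $\delta_k=+1$ and $V_k=\{z_k\}$ when $\delta_k=-1$. Once the neighborhood axioms and the joint continuity of addition are verified for $\tau$, and once the relative $\tau$-topology on $X$ is seen to coincide with the original topology of $X$, the universality of $AP(X)$ forces $\tau$ to be coarser than the topology of $AP(X)$. Since the basic $\tau$-neighborhoods at $w$ are precisely the members of $\mathscr{B}$, each $B\in\mathscr{B}$ is then automatically a neighborhood of $w$ in $AP(X)$, and hence in $AP_n(X)$.

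The main obstacle is the verification of joint continuity of addition under $\tau$. Given reduced words $w_1,w_2$ with Abelian sum $w'=w_1+w_2$ (after cancellation) and a basic $\tau$-neighborhood $\sum_k\delta_k V_k$ of $w'$, one must locate basic $\tau$-neighborhoods of $w_1$ and $w_2$ whose pointwise sum lies in $\sum_k\delta_k V_k$. The difficulty comes from cancellations between letters of $w_1$ and $w_2$ of opposite signs; the $T_1$-hypothesis on $X$, combined with the fact that $X_d^{-1}$ is discrete in $AP(X)$, allows each cancelling pair to be isolated by singleton neighborhoods on the $(-1)$-side, so that the problem reduces to the joint continuity of addition on a finite power of $X$, which is immediate from the continuity of $i_m$ into $AP_m(X)$ for the appropriate $m$.
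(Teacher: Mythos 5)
The paper itself offers no proof of Theorem~\ref{t2}; it is quoted from \cite{EN2012}, so your attempt must stand on its own. Your first half (every neighborhood of $w$ in $AP_{n}(X)$ contains a member of $\mathscr{B}$) is correct and routine: continuity of $i_{n}$ together with the discreteness of $X_{d}^{-1}$ gives it at once. The problem is the second half, that each $B=\epsilon_{1}U_{1}+\cdots+\epsilon_{n}U_{n}$ is actually a neighborhood of $w$, which is the substantive content of the theorem.

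The topology $\tau$ you propose cannot exist, for three compounding reasons. First, your prescription assigns to the empty reduced word the single basic neighborhood $\{e\}$; since translations are homeomorphisms in any paratopological group, $\tau$ would be discrete, so the inclusion of $X$ into $(AP_{a}(X),\tau)$ fails to be continuous for non-discrete $X$ and universality yields nothing. Second, addition is not jointly continuous for $\tau$ even away from $e$: take $w_{1}=x$ and $w_{2}=-x+y$ with $y\neq x$, so that $w_{1}+w_{2}=y$ has basic $\tau$-neighborhoods $V\subseteq X$; any basic neighborhoods $U$ of $w_{1}$ and $\{-x\}+V'$ of $w_{2}$ sum to $U-x+V'$, which contains reduced words of length $3$ (take $u\in U\setminus\{x\}$) and hence cannot lie in $V$. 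Third, and most tellingly, the conclusion you want to extract --- that $B$ is a neighborhood of $w$ in the whole group $AP(X)$ --- is simply false when $X$ is not discrete: every neighborhood of $e$ in $AP(X)$ contains reduced words of arbitrarily large length (cf.\ Theorem~\ref{t8}), so $AP_{n}(X)$ has empty interior in $AP(X)$ and no subset of it is an $AP(X)$-neighborhood of any point. The theorem only asserts that $B$ is a neighborhood of $w$ \emph{in the subspace} $AP_{n}(X)$. A correct proof must exhibit a neighborhood $W$ of $e$ in $AP(X)$ (described, for instance, through the finest quasi-uniformity as in Theorem~\ref{t8}) and carry out the cancellation analysis showing $(w+W)\cap AP_{n}(X)\subseteq B$; that combinatorial step is the real work, and it is entirely absent from your sketch.
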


\begin{theorem}\label{t3}
If $X$ is a $T_{1}$-space, then the mapping $$f=i_{2}\mid _{i_{2}^{-1}(AP_{2}(X)\setminus AP_{1}(X))}: i_{2}^{-1}(AP_{2}(X)\setminus AP_{1}(X))\longrightarrow AP_{2}(X)\setminus AP_{1}(X)$$ is a 2 to 1, open and perfect mapping.
\end{theorem}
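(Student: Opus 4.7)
The plan is to establish the three properties in turn, using Theorem \ref{t2} as the decisive tool: it pins down a base of neighborhoods at every reduced word of $AP_2(X)$, and thus controls exactly how $f$ behaves on neighborhoods. Perfectness will then follow at once from closedness and the finiteness of the fibers.

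First I would describe the fibers of $f$. Since $AP_a(X)$ is abelian and the reduced decomposition of a word of length two is unique up to swapping the two summands, each $w = \epsilon_1 x_1 + \epsilon_2 x_2 \in AP_2(X) \setminus AP_1(X)$ has exactly the two $i_2$-preimages $(\epsilon_1 x_1, \epsilon_2 x_2)$ and $(\epsilon_2 x_2, \epsilon_1 x_1)$; these coincide only in the degenerate case $x_1 = x_2$, $\epsilon_1 = \epsilon_2$. This produces the ``$2$ to $1$'' statement.

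For openness, I would take a basic open set $U_1 \times U_2$ about a preimage $(y_1, y_2) = (\epsilon_1 x_1, \epsilon_2 x_2)$ in the domain; for each $i$, $U_i$ is a neighborhood of $x_i$ in $X$ when $\epsilon_i = 1$, and we may take $U_i = \{\epsilon_i x_i\}$ by discreteness of $X_d^{-1}$ when $\epsilon_i = -1$. A direct computation shows that $f(U_1 \times U_2) = \epsilon_1 U_1 + \epsilon_2 U_2$ (with $U_i$ reinterpreted in $X$ via the sign) is precisely one of the basic neighborhoods of $w$ furnished by Theorem \ref{t2}, so $f$ is open.

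The hard part will be closedness. I would argue contrapositively: let $A$ be closed in the domain and let $w = \epsilon_1 x_1 + \epsilon_2 x_2 \in (AP_2(X) \setminus AP_1(X)) \setminus f(A)$. Both preimages of $w$ lie off $A$, so pick basic open neighborhoods $U = U_1 \times U_2$ and $V = V_1 \times V_2$ of them disjoint from $A$. The crux is to build a basic neighborhood $W = \epsilon_1 W_1 + \epsilon_2 W_2$ of $w$ (as in Theorem \ref{t2}) by choosing each $W_i$ so that $\epsilon_i W_i \subseteq U_i \cap V_{3-i}$: when $\epsilon_i = 1$, this intersection is a genuine neighborhood of $x_i$ in $X$, and when $\epsilon_i = -1$, both $U_i$ and $V_{3-i}$ are forced by discreteness to be $\{\epsilon_i x_i\}$, so the inclusion holds automatically with $W_i = \{x_i\}$. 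Because the reduced decomposition of a point of $W$ is unique up to swap, every $i_2$-preimage of a point of $W \cap (AP_2(X) \setminus AP_1(X))$ must land in $W_1 \times W_2 \subseteq U$ or in $W_2 \times W_1 \subseteq V$. Hence $W \cap f(A) = \emptyset$ and $w \notin \overline{f(A)}$. Finally, since $f$ is closed with fibers of size at most two, $f$ is perfect.
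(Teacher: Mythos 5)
Your proposal is correct and follows essentially the same route as the paper: the fiber description gives the $2$-to-$1$ claim, Theorem~\ref{t2} gives openness, and closedness is obtained by covering the (at most two-point, swap-symmetric) fiber of a word $w\notin f(A)$ by a saturated product neighborhood disjoint from $A$ whose image is a basic neighborhood of $w$ missing $f(A)$. The only cosmetic difference is that you build that basic neighborhood directly from Theorem~\ref{t2}, whereas the paper invokes the already-established openness of $f$; perfectness then follows identically from closedness plus finite fibers.
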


\begin{proof}
Obviously, $f$ is a 2 to 1 mapping. Next, we shall prove that $f$ is open and closed. Let $C_{2}(X)=AP_{2}(X)\setminus AP_{1}(X)$ and $C_{2}^{\ast}(X)=i_{2}^{-1}(AP_{2}(X)\setminus AP_{1}(X))$. Obviously, we have $$C_{2}^{\ast}(X)=(X\times X)\bigoplus (X_{d}^{-1}\times X_{d}^{-1})\bigoplus (X_{d}^{-1}\times X)\setminus\Delta_{1}^{\ast}\bigoplus (X\times X_{d}^{-1})\setminus\Delta_{2}^{\ast}.$$

(1) The mapping $f$ is open.

Let $(x_{1}^{\epsilon_{1}}, x_{2}^{\epsilon_{2}})\in C_{2}^{\ast}(X)$, where $x_{1}, x_{2}\in X$ and $x_{1}\neq x_{2}$ if $\epsilon_{1}\neq\epsilon_{2}$. Let $U$ be a neighborhood of $(x_{1}^{\epsilon_{1}}, x_{2}^{\epsilon_{2}})$ in $C_{2}^{\ast}(X)$. By Theorem~\ref{t2}, $f(U)$ is a neighborhood of $x_{1}^{\epsilon_{1}} x_{2}^{\epsilon_{2}}$ in $C_{2}(X)$. (Indeed, the argument is similar to the proof of \cite[Theorem 3.4]{EN2013}.) Therefore, $f$ is open.

(2) The mapping $f$ is closed.

Let $E$ be a closed subset of $C_{2}^{\ast}(X)$. To show that $i_{2}(E)$ is closed in $C_{2}(X)$ take $w\in \overline{i_{2}(E)}$. Next, we shall show that $w\in i_{2}(E)$. Indeed, it is obvious that $w$ has a reduced form $w=\epsilon_{1}x_{1}+\epsilon_{2}x_{2}$, where $\epsilon_{i}=1$ or -1 ($i=1, 2$), $x_{1}, x_{2}\in X$ and $x_{1}\neq x_{2}$ if $\epsilon_{1}\neq\epsilon_{2}$.

Suppose that $w=x+y\notin i_2(E)$, where $x=\epsilon_{1}x_{1}$ and $y=\epsilon_{2}x_{2}$. Then $\{(x, y), (y, x)\}\cap E=\emptyset$. Since $E$ is closed, we can pick open neighborhoods $V(x)$ of $x$ in $X\cup X_{d}^{-1}$, $V(y)$ of $y$ in $X\cup X_{d}^{-1}$ such that $(V(x)\times V(y))\cap E=\emptyset$, $(V(y)\times V(x))\cap E=\emptyset$.  Let $U=(V(x)\times V(y))\cup (V(y)\times V(x))$. Then $U$ is open. Since $f$ is an open map, we have $f(U)$ is a neighborhood of $w$ and $f(U)\cap i_2(E)=\emptyset$. This contradicts with $w\in \overline{i_2(E)}$.

\end{proof}

For arbitrary space $X$, the mapping $f: X\longrightarrow \mathbb{Z}$ defined by setting $f(x)=1$ for all $x\in X$ is continuous, and thus extends to a continuous homomorphism $\widehat{f}: AP(X)\longrightarrow \mathbb{Z}$. Therefore, the collection of sets $Z_{n}(X)=\widehat{f}^{-1}(\{n\})$ for $n\in \mathbb{Z}$ forms a partition of $AP(X)$ into clopen subspaces.

For a $T_{1}$-space, define $$g: (X_{d}\times X)\bigoplus (X\times X_{d})\bigoplus (\{e\}\times\{e\})\longrightarrow AP_{2}(X)\cap Z_{0}(X)$$by
\[g(x, y)=\left\{
\begin{array}{lll}
-x+y, & \mbox{if}\ (x, y)\in X_{d}\times X;\\
x-y, & \mbox{if}\ (x, y)\in X\times X_{d};\\
e, & \mbox{if}\ x=y.\end{array}\right.\]

Let $g_{j}=i_{2}\mid _{i_{2}^{-1}(AP_{2}(X)\cap Z_{j}(X))}$ for $j=-2, \cdots, 2$, where $i_{2}: (X\bigoplus X_{d}^{-1}\bigoplus\{e\})^{2}\longrightarrow AP_{2}(X)$. Obviously, $i_{2}=\bigoplus_{j=-2}^{j=2}\{g_{j}\}$, and $i_{2}$ is a closed (resp. quotient) mapping if and only if each $g_{j}$ is a closed (resp. quotient) mapping, where  $j=-2, \cdots, 2$. By Theorem~\ref{t3}, it is easy to see that $g_{-2}$ and $g_{2}$ are open and closed. Moreover, since $-X$ occurs with the discrete topology and $X$ occurs with its original topology in $AP(X)$, the mappings $g_{-1}$ and $g_{1}$ are open and closed. Obviously, $g$ is a closed (resp. quotient) mapping if and only if $g_{0}$ is a closed (resp. quotient) mapping. Therefore, we have the following result:

\begin{lemma}\label{l0}
Let $X$ be a $T_{1}$-space. Then $i_{2}$ is a closed (resp. quotient) mapping if and only if $g$ is a closed (resp. quotient) mapping.
\end{lemma}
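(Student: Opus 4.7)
The plan is to combine the clopen decomposition of $i_2$ already outlined in the paragraph preceding the lemma with a natural homeomorphism identifying the domain of $g_0$ with that of $g$.

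First I would make precise the claim that $i_2=\bigoplus_{j=-2}^{2}g_j$ is closed (respectively, quotient) iff every $g_j$ is. Since $\widehat{f}:AP(X)\to\mathbb{Z}$ is continuous and each $Z_j(X)=\widehat{f}^{-1}(j)$ is clopen, the sets $AP_2(X)\cap Z_j(X)$ for $j=-2,\dots,2$ form a clopen partition of $AP_2(X)$, and hence their preimages $D_j:=i_2^{-1}(AP_2(X)\cap Z_j(X))$ form a clopen partition of $(X\oplus X_d^{-1}\oplus\{e\})^2$. A subset of a topological sum is closed (respectively, open) iff its intersection with each clopen summand is closed (respectively, open) in that summand, so $i_2$ is closed (respectively, quotient) iff every $g_j$ is. Combined with the already-recorded fact that $g_{\pm 2}$ and $g_{\pm 1}$ are open-and-closed, this reduces the lemma to the assertion that $g_0$ is closed/quotient iff $g$ is closed/quotient.

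The main step is then to compute
\[
D_0 = (X_d^{-1}\times X)\oplus(X\times X_d^{-1})\oplus\{(e,e)\}
\]
by enumerating the nine cases for $(y_1,y_2)\in(X\oplus X_d^{-1}\oplus\{e\})^2$ and checking which produce $y_1+y_2\in Z_0(X)$, and then to exhibit a homeomorphism
\[
\varphi:(X_d\times X)\oplus(X\times X_d)\oplus\{(e,e)\}\longrightarrow D_0
\]
defined piecewise by $\varphi(x,y)=(-x,y)$ on $X_d\times X$, $\varphi(x,y)=(x,-y)$ on $X\times X_d$, and $\varphi(e,e)=(e,e)$. Since $X_d$ and $X_d^{-1}$ are both discrete (the latter by construction), the inversion map $x\mapsto -x$ is a bijective homeomorphism $X_d\to X_d^{-1}$, so $\varphi$ is a homeomorphism on each summand and hence overall. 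Comparing the piecewise formulas for $g$ with the formulas for $g_0$ on $D_0$ yields $g=g_0\circ\varphi$, from which $g$ and $g_0$ inherit one another's closedness and quotient properties.

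This lemma is essentially a bookkeeping consequence of the preceding paragraph, and no step is substantively difficult. The one place that warrants care is the clopen-partition argument that converts closedness/quotientness of $i_2$ into the corresponding property of each $g_j$; after that, the identification $g\cong g_0$ via $\varphi$ is forced by the discreteness of $X_d$ and $X_d^{-1}$.
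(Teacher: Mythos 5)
Your proposal is correct and follows essentially the same route as the paper, which proves this lemma via the clopen decomposition $i_{2}=\bigoplus_{j=-2}^{2}g_{j}$ over the partition $\{Z_{j}(X)\}$ and the identification of $g$ with $g_{0}$ stated in the paragraph preceding the lemma. Your only addition is to make explicit the homeomorphism $\varphi$ witnessing $g=g_{0}\circ\varphi$, a detail the paper leaves as ``obvious.''
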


\begin{lemma}\label{l1}\cite{EN2013}
Let $X$ be a space and let $\Delta_{1}$ be the diagonal in the space $X_{d}\times X$. Then $\Delta_{1}$ is closed if and only if $X$ is $T_{1}$. Similarly for the diagonal $\Delta_{2}$ in the space $X\times X_{d}$.
\end{lemma}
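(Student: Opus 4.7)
The plan is to compute the closure of $\Delta_{1}$ in $X_{d}\times X$ directly from basic open sets, and then recognize the result in terms of closures of singletons in $X$. Since the first coordinate carries the discrete topology, a basic open neighborhood of an arbitrary point $(x,y)\in X_{d}\times X$ has the form $\{x\}\times V$ with $V$ open in $X$ and $y\in V$.

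First I would compute the intersection $(\{x\}\times V)\cap\Delta_{1}$: it consists of pairs $(z,z)$ with $z=x$ and $z\in V$, and so is nonempty exactly when $x\in V$. Consequently, $(x,y)\in\overline{\Delta_{1}}$ if and only if every open neighborhood of $y$ in $X$ contains $x$, which is the same as saying $y\in\overline{\{x\}}^{\,X}$.

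From this characterization both directions are immediate. If $\Delta_{1}$ is closed and $y\in\overline{\{x\}}^{\,X}$, then $(x,y)\in\overline{\Delta_{1}}=\Delta_{1}$, forcing $y=x$; hence every singleton of $X$ is closed and $X$ is $T_{1}$. Conversely, if $X$ is $T_{1}$ then $\overline{\{x\}}^{\,X}=\{x\}$ for every $x$, so any $(x,y)\in\overline{\Delta_{1}}$ satisfies $y=x$, placing it in $\Delta_{1}$.

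The statement for $\Delta_{2}$ in $X\times X_{d}$ is entirely symmetric: basic open neighborhoods of $(x,y)$ now have the form $V\times\{y\}$, and an analogous calculation gives $(x,y)\in\overline{\Delta_{2}}$ iff $x\in\overline{\{y\}}^{\,X}$, which again translates exactly to the $T_{1}$ separation axiom. There is no genuine obstacle in this argument; the only subtle point is to notice that the discreteness of one factor collapses the relevant basic open sets to products of a singleton with an ordinary open set, after which the equivalence with $T_{1}$ is just the definition of closed singletons.
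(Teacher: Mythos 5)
Your argument is correct: the computation of $\overline{\Delta_{1}}$ via basic open sets $\{x\}\times V$, the identification $(x,y)\in\overline{\Delta_{1}}\Leftrightarrow y\in\overline{\{x\}}$, and the translation to closedness of singletons are all accurate, as is the symmetric treatment of $\Delta_{2}$. The paper itself gives no proof of this lemma (it is quoted from the cited reference of Elfard and Nickolas), and your direct computation is exactly the standard argument one would supply, so there is nothing to reconcile.
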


Suppose that $\mathscr{U}^{\ast}$ is the finest quasi-uniformity of a space $X$. We say that $P=\{U_{i}\}_{i\in\mathbb{N}}$ is a sequence of $\mathscr{U}^{\ast}$ if each $U_{i}\in \mathscr{U}^{\ast}$. Put $$^{\omega}\mathscr{U}^{\ast}=\{P: P\ \mbox{is a sequence of}\ \mathscr{U}^{\ast}\}.$$

For each $n\in\mathbb{N}$ and $P=\{U_{i}\}_{i\in\mathbb{N}}\in\ ^{\omega}\mathscr{U}^{\ast}$, let

$\mathscr{Q}_{n}(\mathbb{N})=\{A\subset \mathbb{N}: |A|=n\},$

$W_{n}(P)=\{-x_{1}+y_{1}-\cdots -x_{n}+y_{n}: (x_{j}, y_{j})\in U_{i_{j}}\ \mbox{for}\ j=1, 2, \cdots, n,$ $\{i_{1}, i_{2}, \cdots, i_{n}\}\in \mathscr{Q}_{n}(\mathbb{N})\},$ and

$\mathscr{W}_{n}=\{W_{n}(P): P\in\ ^{\omega}\mathscr{U}^{\ast}\}$.

{\bf Remark} In the above definition, for $P=\{U_{i}\}_{i\in\mathbb{N}}\in\ ^{\omega}\mathscr{U}^{\ast}$, there may exist $i\neq j$ such that $U_{i}=U_{j}$. In particular, for every $U\in\mathscr{U}^{\ast}$, we have $\{U_{i}=U\}_{i\in\mathbb{N}}$ is also in $^{\omega}\mathscr{U}^{\ast}$. Moreover, the reader should note that the representation of elements of $W_{n}(P)$ need not be a reduced representation.

\begin{theorem}\cite{LFC2012}\label{t8}
For each $n\in \mathbb{N}$, the family $\mathscr{W}_{n}$ is a neighborhood base of $e$ in $AP_{2n}(X)$.
\end{theorem}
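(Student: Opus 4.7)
The plan is to prove the two containments that together say $\mathscr{W}_n$ is a neighborhood base of $e$: (a) every $W_n(P) \in \mathscr{W}_n$ is a neighborhood of $e$ in $AP_{2n}(X)$, and (b) every open neighborhood of $e$ in $AP_{2n}(X)$ contains some $W_n(P)$.

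For (a), the strategy is to lift the finest quasi-uniformity $\mathscr{U}^{\ast}$ on $X$ to a paratopological group topology on the abstract free Abelian group $AP_{a}(X)$, and then appeal to the universal property of $AP(X)$. Given $P = \{U_{i}\}_{i\in\mathbb{N}} \in {}^{\omega}\mathscr{U}^{\ast}$, one builds a translation-invariant quasi-uniformity on $AP_{a}(X)$ whose basic neighborhood of $e$, when restricted to words of reduced length at most $2n$, lies inside $W_n(P)$. The distinct-index condition packaged into $\mathscr{Q}_n(\mathbb{N})$ is crucial here: it ensures that composing basic entourages across different slots stays inside the $\mathscr{W}$-family, which is precisely what permits joint continuity of addition in the resulting topology $\tau_P$. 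Once $\tau_P$ is available and the inclusion $X \hookrightarrow (AP_{a}(X),\tau_P)$ is seen to be continuous by construction, the universal property forces $\tau_P$ to be coarser than the free paratopological topology, so each $W_n(P)$ is a neighborhood of $e$ in $AP(X)$ and hence in $AP_{2n}(X)$.

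For (b), let $O$ be an open neighborhood of $e$ in $AP_{2n}(X)$. By joint continuity of addition, viewed as a map from $AP_{2}(X)^{n}$ to $AP_{2n}(X)$, one extracts neighborhoods $V_{1},\ldots,V_{n}$ of $e$ in $AP_{2}(X)$ with $V_{1}+\cdots+V_{n}\subseteq O$. Each $V_{j}$ contains a sub-neighborhood of $e$ of the form $\{-x+y : (x,y) \in U_{j}\}\cup\{e\}$ for some $U_{j}\in\mathscr{U}^{\ast}$; this is obtained from Theorem~\ref{t2} applied to the words $-x+y\in AP_{2}(X)$, combined with the defining property of the finest quasi-uniformity (every family of neighborhood assignments $x\mapsto N_{x}$ arising from open neighborhoods is captured by some entourage in $\mathscr{U}^{\ast}$). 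Interleaving $U_{1},\ldots,U_{n}$ into a single sequence $P\in{}^{\omega}\mathscr{U}^{\ast}$ by placing $U_{j}$ at index $j$ and filling in with arbitrary elements elsewhere, the $n$ slots can be realised using the distinct indices $\{1,\ldots,n\}\in\mathscr{Q}_{n}(\mathbb{N})$, giving $W_n(P) \subseteq V_1+\cdots+V_n \subseteq O$.

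The principal obstacle will be part (a): constructing the quasi-uniformity on $AP_{a}(X)$ whose associated topology realises $W_n(P)$ as a basic $e$-neighborhood. One must check both that the candidate entourages form a filter base closed under composition (which is exactly where the distinct-index condition in $\mathscr{Q}_n(\mathbb{N})$ is spent) and that they are translation-invariant, so that a paratopological group topology, rather than merely a topology, results. The standard remedy is a Graev-style construction adapted to quasi-uniformities, using a length-like functional on words built out of the sequence $P$; this is where the bulk of the technical work lies, and it is presumably why the theorem is cited from \cite{LFC2012} rather than re-proved here.
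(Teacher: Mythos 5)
The paper does not actually prove this statement; it is quoted from \cite{LFC2012}, so there is no internal proof to compare against. Your two-part skeleton (lift $P$ to a paratopological group topology on $AP_{a}(X)$ and invoke the universal property for one inclusion; use continuity of addition and the $n=1$ case for the other) is the standard route, but as written it has concrete gaps. The most serious is in part (b): the parenthetical principle you invoke --- that every neighborhood assignment $x\mapsto N_{x}$ is captured by an entourage of $\mathscr{U}^{\ast}$ --- is precisely condition (2)/(3) of Theorem~\ref{t4}, which is \emph{not} a general property of the finest compatible quasi-uniformity; it fails, for instance, for $X=[0,1]$ by Proposition~\ref{p1}. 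The fact you actually need (every neighborhood $V$ of $e$ in $AP_{2}(X)$ contains $\{-x+y:(x,y)\in U\}$ for some $U\in\mathscr{U}^{\ast}$) is true, but for a different reason: the sets $U_{V}=\{(x,y):-x+y\in V\}$ form the trace on $X$ of the left quasi-uniformity of the paratopological group $AP(X)$, which is compatible with the topology of $X$ because $X$ embeds in $AP(X)$, and is therefore coarser than $\mathscr{U}^{\ast}$. A second error in (b): defining $P$ by putting $U_{j}$ in slot $j$ and ``filling in with arbitrary elements elsewhere'' does not give $W_{n}(P)\subseteq O$, since $W_{n}(P)$ ranges over \emph{all} $n$-element subsets of $\mathbb{N}$ and therefore picks up the arbitrary fill-ins; you should instead take the constant sequence with value $\bigcap_{j=1}^{n}U_{j}$.

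In part (a) you correctly locate the difficulty but do not resolve it. Building the topology $\tau_{P}$ from basic sets $V_{Q}=\bigcup_{m}W_{m}(Q)$ and checking $V_{Q'}+V_{Q'}\subseteq V_{Q}$ via the distinct-index device is routine; the genuinely delicate step is showing that the resulting neighborhood of $e$, intersected with $AP_{2n}(X)$, lies inside $W_{n}(P)$ --- that is, that a sum of $m>n$ entourage-pairs whose reduced length is at most $2n$ can be rewritten, after cancellation and using the compositional refinements built into $Q$, as a sum of at most $n$ pairs from the original $U_{i}$ with distinct indices. That absorption argument is the heart of the theorem and is exactly what is carried out in \cite{LFC2012}; deferring it entirely leaves the proposal incomplete as a proof.
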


The proof of the following Theorem is a modification of \cite[Theorem 3.10]{EN2013}.

\begin{theorem}\label{t7}
Let $X$ be a $T_{1}$-space. Then the mapping $$i_{2}: (X\bigoplus X_{d}^{-1}\bigoplus\{e\})^{2}\longrightarrow AP_{2}(X)$$ is a quotient mapping if and only if every neighborhood $U$ of the diagonal $\Delta_{1}$ in $X_{d}\times X$ is a member of the finest quasi-uniformity $\mathscr{U}^{\ast}$ on $X$.
\end{theorem}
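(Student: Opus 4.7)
\medskip
\noindent\emph{Proof plan.} The natural first move is to invoke Lemma~\ref{l0} and reduce the statement to showing that the restricted map
\[
g:(X_{d}\times X)\bigoplus(X\times X_{d})\bigoplus\{(e,e)\}\longrightarrow AP_{2}(X)\cap Z_{0}(X)
\]
is quotient if and only if every neighborhood of $\Delta_{1}$ in $X_{d}\times X$ lies in $\mathscr{U}^{\ast}$. A preliminary calculation in the free abelian group $AP_{a}(X)$ shows that $g$ is two-to-one off the fiber over $e$: explicitly, $g^{-1}(e)=\Delta_{1}\cup\Delta_{2}\cup\{(e,e)\}$ and, for $a\neq b$, $g^{-1}(-a+b)=\{(a,b),(b,a)\}$ with the first coordinate in $X_{d}\times X$ and the second in $X\times X_{d}$. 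Also observe that for any open $U\subseteq X_{d}\times X$, the ``flip'' $U'=\{(y,x):(x,y)\in U\}$ is automatically open in $X\times X_{d}$, because each basic open $\{x\}\times N\subseteq U$ corresponds to the basic open $N\times\{x\}\subseteq U'$.

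For the forward direction, assume $g$ is quotient and let $U$ be an open neighborhood of $\Delta_{1}$. Set $V=\{-x+y:(x,y)\in U\}$. The two-to-one analysis above yields
\[
g^{-1}(V)=U\cup U'\cup\{(e,e)\},
\]
which is open in the domain (using that $(e,e)$ is isolated in the third summand). Since $g$ is quotient, $V$ is open in $AP_{2}(X)\cap Z_{0}(X)$, and because $Z_{0}(X)$ is clopen in $AP(X)$, $V$ is an open neighborhood of $e$ in $AP_{2}(X)$. By Theorem~\ref{t8} applied with $n=1$, there exists $P=\{U_{i}\}_{i\in\mathbb{N}}\in{}^{\omega}\mathscr{U}^{\ast}$ with $W_{1}(P)\subseteq V$. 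Then for any $(x,y)\in U_{1}$ we have $-x+y\in V$, which by the computation of $V$ forces $(x,y)\in U$; hence $U_{1}\subseteq U$ and, as $\mathscr{U}^{\ast}$ is a filter, $U\in\mathscr{U}^{\ast}$.

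For the converse, assume every open neighborhood of $\Delta_{1}$ in $X_{d}\times X$ lies in $\mathscr{U}^{\ast}$, and let $V\subseteq AP_{2}(X)\cap Z_{0}(X)$ satisfy $g^{-1}(V)$ open. I show $V$ is a neighborhood of each of its points $w$. If $w\neq e$, write $w=-a+b$ with $a\neq b$; openness of $g^{-1}(V)$ at $(a,b)\in X_{d}\times X$ produces an open neighborhood $N$ of $b$ in $X$ with $\{a\}\times N\subseteq g^{-1}(V)$, and then the set $-\{a\}+N=g(\{a\}\times N)\subseteq V$ is a basic neighborhood of $w$ by Theorem~\ref{t2}. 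If $w=e$, put $U=g^{-1}(V)\cap(X_{d}\times X)$, which is an open neighborhood of $\Delta_{1}$, so by hypothesis $U\in\mathscr{U}^{\ast}$. Taking the constant sequence $P=\{U_{i}=U\}_{i\in\mathbb{N}}$, Theorem~\ref{t8} tells us that $W_{1}(P)=\{-x+y:(x,y)\in U\}$ is a neighborhood of $e$ in $AP_{2}(X)$, and by construction $W_{1}(P)\subseteq V$.

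The one spot where genuine care is required is the preimage identity $g^{-1}(V)=U\cup U'\cup\{(e,e)\}$ in the forward direction: one must simultaneously use that $g$ is two-to-one off the diagonals (so that preimages from the $X\times X_{d}$ side appear precisely as the flip $U'$, not as something unexpected), that $\Delta_{1}\subseteq U$ (so the whole $g$-fiber over $e$ is already absorbed), and that $(e,e)$ is isolated in its summand (so its contribution to $g^{-1}(V)$ is automatically open). Once this bookkeeping is pinned down, the neighborhood base of Theorem~\ref{t8} does the heavy lifting on both sides, and the off-diagonal case reduces to the standard openness described by Theorem~\ref{t2}.
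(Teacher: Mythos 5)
Your proposal is correct and follows essentially the same route as the paper: reduce to the map $g$ via Lemma~\ref{l0}, compute $g^{-1}\bigl(W_{1}(P)\bigr)=U\cup U'\cup\{(e,e)\}$ and apply Theorem~\ref{t8} for necessity, and use the hypothesis plus Theorem~\ref{t8} at the fiber over $e$ for sufficiency. The one genuine difference is in the sufficiency direction at points $w\neq e$: the paper routes this through a separate Claim that rests on Theorem~\ref{t3} (openness of the restriction of $i_{2}$ over $AP_{2}(X)\setminus AP_{1}(X)$), whereas you argue directly from the neighborhood base of Theorem~\ref{t2}, picking a basic set $\{a\}\times N\subseteq g^{-1}(V)$ and pushing it forward to $-a+N\subseteq V$; this is a mild simplification that avoids invoking Theorem~\ref{t3} altogether. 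The only cosmetic gap is that you treat $U$ as an \emph{open} neighborhood of $\Delta_{1}$ in the necessity direction while the statement concerns arbitrary neighborhoods, but this is absorbed by the filter property of $\mathscr{U}^{\ast}$ exactly as in the paper.
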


\begin{proof}
Put $Z=(X_{d}\times X)\bigoplus (X\times X_{d})\bigoplus (\{e\}\times\{e\})$.

{\bf Necessity.} Suppose that $i_{2}$ is a quotient mapping.  It follows from Lemma~\ref{l0} that $g: Z\longrightarrow AP_{2}(X)\cap Z_{0}(X)$ is a quotient mapping. Let $U$ be a neighborhood of $\Delta_{1}$ in $X_{d}\times X$. Obviously, $U\cup (-U)$ is a neighborhood of $\Delta_{1}\cup\Delta_{2}$ in $Z$. Let $P=\{U_n\}_{n\in\mathbb{N}}$, where $U_{n}=U$ for each $n\in \mathbb{N}$. Let  $W_{1}(P)=\{-x+y: (x, y)\in U\}$. Then $g^{-1}(W_{1}(P))=U\cup (-U)\cup \{(e, e)\}$ that is a neighborhood of $\Delta_{1}\cup\Delta_{2}\cup \{(e, e)\}$ in $Z$, then $W_{1}(P)$ is a neighborhood of $e$ in $AP_{2}(X)\cap Z_{0}(X)$, and hence in $AP_{2}(X)$. By Theorem~\ref{t8}, there exists $Q\in\ ^{\omega}\mathscr{U}^{\ast}$  such that $W_{1}(Q)\subset W_{1}(P)$, where $Q=\{V_n\}_{n\in\mathbb{N}}$. Then $V_1\subset U$, hence $U\in\mathscr{U}^{\ast}$.

{\bf Sufficiency.} Suppose that every neighborhood $U$ of the diagonal $\Delta_{1}$ in $X_{d}\times X$ is a member of the finest quasi-uniformity $\mathscr{U}^{\ast}$ on $X$. To show that $i_{2}$ is a quotient mapping, it follows from Lemma~\ref{l0} that it suffices to show that the mapping $g: Z\longrightarrow AP_{2}(X)\cap Z_{0}(X)$ is a quotient mapping. Take a subset $A\subset AP_{2}(X)\cap Z_{0}(X)$ such that $g^{-1}(A)$ is open in $Z$. Put $U=g^{-1}(A)\cap (X_{d}\times X)$ and $V=g^{-1}(A)\cap (X\times X_{d})$. Firstly, we show the following claim:

{\bf Claim:} If $e\not\in A$, then $A$ is open in $AP_{2}(X)\cap Z_{0}(X)$.

Indeed, since $e\not\in A$, $U\cap \Delta_{1}=\emptyset$ and $V\cap \Delta_{2}=\emptyset$. By Lemma~\ref{l1}, $\Delta_{1}$ and $\Delta_{2}$ are closed in $X_{d}\times X$ and $X\times X_{d}$, respectively, and $X_{d}\times X\setminus\Delta_{1}$ and $X\times X_{d}\setminus\Delta_{2}$ are open in $X_{d}\times X$ and $X\times X_{d}$, respectively. Hence $U\cup V$ is open in the space
$i_{2}^{-1}(AP_{2}(X)\setminus AP_{1}(X))$, and by Theorem~\ref{t3}, $g(U\cup V)=A$ is open in $AP_{2}(X)\cap Z_{0}(X)$.

Next we shall show that $A$ is open in $AP_{2}(X)\cap Z_{0}(X)$. Take arbitrary $a\in A$. Then it suffices to show that $A$ is open neighborhood of $a$.

{\bf Case 1:} $a=e$.

Obviously, $U$ and $V$ are open neighborhoods of $\Delta_{1}$ and $\Delta_{2}$ in $X_{d}\times X$ and $X\times X_{d}$, respectively. Therefore, $S=U\cap (V^{-1})$ is an open neighborhood of $\Delta_{1}$ in $X_{d}\times X$, and thus $S\in \mathscr{U}^{\ast}$. Let $W_{1}(R)=\{-x+y: (x, y)\in S\}$, where $R=\{S_{n}\}_{n\in\mathbb{N}}$ and $S_{n}=S$ for each $n\in\mathbb{N}$. By Theorem~\ref{t8}, $W_{1}(R)$ is a neighborhood of $e$ in $AP_{2}(X)$. Since $S=U\cap (V^{-1})$ and the definition of $g$, it is easy to see that $W_{1}(R)\subset A$. Therefore, $A$ is a neighborhood of $e$ in $AP_{2}(X)$, hence in $AP_{2}(X)\cap Z_{0}(X).$

{\bf Case 2:} $a\neq e$.

Let $W$ be an open neighborhood of $a$ in $AP_{2}(X)\cap Z_{0}(X)$ such that $e\not\in W$. Then the set $g^{-1}(A\cap W)$ is open in $Z$, and it follows from Claim that $A\cap W$ is an open neighborhood of $a$ in $AP_{2}(X)\cap Z_{0}(X).$ Hence $A$ is open in $AP_{2}(X)\cap Z_{0}(X).$
\end{proof}

The next theorem is the main result in \cite{EN2013}, and some related concepts can be seen in \cite{F1982}. Next, we shall improve this result in Theorem~\ref{t5}.

\begin{theorem}\label{t4}\cite{EN2013}
Let $X$ be a $T_{1}$-space. Then the followings are equivalent:
\begin{enumerate}
\item The mapping $i_{2}: (X\bigoplus X_{d}^{-1}\bigoplus\{e\})^{2}\longrightarrow FP_{2}(X)$ is a quotient mapping;

\item  Every neighborhood $U$ of the diagonal $\Delta_{1}$ in $X_{d}\times X$ is a member of the finest quasi-uniformity $\mathscr{U}^{\ast}$ on $X$;

\item Every neighbornet of $X$ is normal;

\item The finest quasi-uniformity $\mathscr{U}^{\ast}$ on $X$ consists of all neighborhoods of the diagonal $\Delta_{1}$ in $X_{d}\times X$;

\item If $N_{x}$ is a neighborhood of $x$ for all $x\in X$, then there exists a neighborhood $M_{x}$ of $x$ such that $\bigcup_{y\in M_{x}}M_{y}\subset N_{x}$ for all $x\in X$;

\item If $N_{x}$ is a neighborhood of $x$ for all $x\in X$, then there exists a quasi-pseudometric $d$ on $X$ such that $d_{x}$ is upper semi-continuous and $B_{d}(x, 1)\subset N_{x}$ for all $x\in X$.
\end{enumerate}
\end{theorem}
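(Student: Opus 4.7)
My plan is to split the six-way equivalence into two groups. The chain $(2)\Leftrightarrow(3)\Leftrightarrow(4)\Leftrightarrow(5)\Leftrightarrow(6)$ is a classical ring of equivalences within the theory of the finest compatible quasi-uniformity $\mathscr{U}^{\ast}$ on a topological space; all four equivalences can be extracted from \cite{F1982}. Specifically, (2) and (4) are tautological reformulations once one notes that a neighborhood of $\Delta_{1}$ in $X_{d}\times X$ is exactly a neighbornet of $X$; (3) is the Fletcher characterization of the members of $\mathscr{U}^{\ast}$ as the normal neighbornets; (5) is the standard iterative formulation of normality of a neighbornet; and (6) is the Fletcher--Lindgren quasi-pseudometrization of a normal sequence of neighbornets. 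So the topological content to prove is $(1)\Leftrightarrow(2)$, which is the non-abelian analogue of the already-established Theorem~\ref{t7}.

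For $(1)\Rightarrow(2)$, given a neighborhood $U$ of $\Delta_{1}$ in $X_{d}\times X$, I would set
\[
A=\{e\}\cup\{x^{-1}y:(x,y)\in U\}\cup\{xy^{-1}:(y,x)\in U\},
\]
and check, using $i_{2}^{-1}(e)=\{(e,e)\}\cup\Delta_{1}^{\ast}\cup\Delta_{2}^{\ast}$ together with Theorem~\ref{t0} and the discreteness of $X^{-1}$ in $FP(X)$, that $i_{2}^{-1}(A)$ is open in $(X\bigoplus X_{d}^{-1}\bigoplus\{e\})^{2}$. If $i_{2}$ is quotient then $A$ is a neighborhood of $e$ in $FP_{2}(X)$. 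Combined with the $FP_{2}$-analogue of Theorem~\ref{t8} from \cite{LFC2012}, which describes a base of neighborhoods of $e$ in $FP_{2}(X)$ in terms of sequences in $^{\omega}\mathscr{U}^{\ast}$, this forces $U\in\mathscr{U}^{\ast}$.

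For $(2)\Rightarrow(1)$, given $A\subset FP_{2}(X)$ with $i_{2}^{-1}(A)$ open, I would verify openness of $A$ at each $a\in A$ by a case split. Points of $FP_{2}(X)\setminus FP_{1}(X)$ are handled by the homeomorphism in Theorem~\ref{t0}; points of $X\setminus\{e\}$ by Theorem~\ref{t9}, which shows that a basic neighborhood of $x\in X$ in $FP_{2}(X)$ is just a neighborhood of $x$ in $X$; points of $X^{-1}\setminus\{e\}$ because the same theorem makes each $\{x^{-1}\}$ open in $FP_{2}(X)$. The decisive case is $a=e$: one extracts from $i_{2}^{-1}(A)\cap(X_{d}^{-1}\times X)$ a neighborhood of $\Delta_{1}$ in $X_{d}\times X$, and from $i_{2}^{-1}(A)\cap(X\times X_{d}^{-1})$ a neighborhood of $\Delta_{2}$ in $X\times X_{d}$. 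Hypothesis (2) applied to the first, and to the coordinate-swap of the second, places both in $\mathscr{U}^{\ast}$ (the latter up to inversion), and the base description of $e$ in $FP_{2}(X)$ assembles these into a neighborhood of $e$ contained in $A$.

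The chief obstacle compared to the abelian Theorem~\ref{t7} is the interplay between the $\Delta_{1}$- and $\Delta_{2}$-type data at $e$. In the abelian proof these were merged into a single entourage $U\cap V^{-1}$ because $-x+y=y-x$; in $FP_{2}(X)$ the non-commutativity prevents this collapse, so one must carry both pieces through separately and rely on the precise form of the $FP_{2}$ neighborhood-base description from \cite{LFC2012} to conclude. Verifying that the sets $W(P,Q)$ appearing in that base actually sit inside $A$, starting from the two separate neighborhood-of-diagonal conditions extracted above, is the step requiring the most care and the place where hypothesis (2) is genuinely used.
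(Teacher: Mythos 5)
The first thing to say is that the paper itself offers no proof of Theorem~\ref{t4}: it is imported verbatim from \cite{EN2013} and used as a black box, so there is nothing in the paper to compare your argument against line by line. The nearest in-paper object is the proof of the abelian analogue, Theorem~\ref{t7}, and your outline is essentially its non-abelian transcription, assembled from exactly the tools the paper provides: Theorem~\ref{t0} for points of $FP_{2}(X)\setminus FP_{1}(X)$, Theorem~\ref{t9} for the length-one points, and a base of neighborhoods of $e$ in $FP_{2}(X)$ --- which, note, is already stated in the paper as Theorem~\ref{t6} (from \cite{EN2013}); you should cite that rather than an ``$FP_{2}$-analogue of Theorem~\ref{t8} from \cite{LFC2012}''. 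Your deferral of $(2)\Leftrightarrow(3)\Leftrightarrow(4)\Leftrightarrow(5)\Leftrightarrow(6)$ to \cite{F1982} is legitimate and is what the original source does. Two steps need tightening. In $(1)\Rightarrow(2)$, your set $A$ has open preimage only after you first replace $U$ by the smaller neighborhood $\bigcup_{x\in X}\{x\}\times\operatorname{int}U[x]$ of $\Delta_{1}$ (harmless, since $\mathscr{U}^{\ast}$ is a filter): for $(y,x)\in U$ with $x\neq y$ the fibre $U[y]$ is a neighborhood of $y$ but need not be a neighborhood of $x$, so without this reduction the point $(x,y^{-1})$ of $i_{2}^{-1}(A)$ need not be interior. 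In $(2)\Rightarrow(1)$, Theorem~\ref{t9} as stated gives a base at $x$ only in $FP_{1}(X)$; to promote a neighborhood of $x$ in $X$ to one in $FP_{2}(X)$ you need the observation that $X=Z_{1}\cap FP_{2}(X)$ and $X^{-1}=Z_{-1}\cap FP_{2}(X)$ are clopen, where $Z_{n}$ are the fibres of the exponent-sum homomorphism $FP(X)\to\mathbb{Z}$ (the paper records this device only for $AP(X)$, but it works verbatim for $FP(X)$). Finally, the difficulty you anticipate at $a=e$ from non-commutativity does not really materialize: the entourage extracted from $i_{2}^{-1}(A)\cap(X_{d}^{-1}\times X)$ and the coordinate-swap of the one extracted from $i_{2}^{-1}(A)\cap(X\times X_{d}^{-1})$ can simply be intersected into a single $W\in\mathscr{U}^{\ast}$ with $j_{2}(W)\cup k_{2}(W)\subseteq A$, exactly as in Case 1 of the proof of Theorem~\ref{t7}. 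With these repairs your argument is correct.
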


Let $X$ be a set. Define $j_{2}, k_{2}: X\times X\longrightarrow F_{a}(X)$ by $j_{2}(x, y)=x^{-1}y$ and $k_{2}(x, y)=yx^{-1}$.

\begin{theorem}\label{t6}\cite{EN2013}
Let $X$ be a topological space. Then the collection $\mathscr{B}$ of sets $j_{2}(U)\cup k_{2}(U)$ for $U\in\mathscr{U}^{\ast}$ is a base of neighborhoods at the identity $e$ in $FP_{2}(X)$.
\end{theorem}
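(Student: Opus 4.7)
My plan is to verify the two defining properties of a neighborhood base at $e$ separately: (i) every member of $\mathscr{B}$ is a neighborhood of $e$ in $FP_2(X)$, and (ii) every neighborhood of $e$ in $FP_2(X)$ contains some member of $\mathscr{B}$. Throughout, I will use that $X^{-1}$ is discrete in $FP(X)$ (valid when $X$ is $T_1$, the interesting case) together with the universal property of $FP(X)$ and the characterization of $\mathscr{U}^{\ast}$ as the supremum of all quasi-uniformities on $X$ whose associated topology agrees with the given one.

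For (i), fix $U \in \mathscr{U}^{\ast}$. Since $U$ lies in the finest quasi-uniformity, there exist a paratopological group $G$, a continuous mapping $f : X \to G$, and a neighborhood $V$ of the identity $e_G$ such that both $\{(x,y) : f(x)^{-1} f(y) \in V\} \subset U$ and $\{(x,y) : f(y) f(x)^{-1} \in V\} \subset U$. Extending $f$ to a continuous homomorphism $\widehat{f} : FP(X) \to G$ by universality, the set $\widehat{f}^{-1}(V) \cap FP_2(X)$ is an open neighborhood of $e$ in $FP_2(X)$. After possibly shrinking $V$ so as to exclude non-identity elements of $f(X)$ from $V$ (which is a clean task because $X^{-1}$ is discrete, forcing singleton factors at negative positions), a case analysis on the reduced form of elements in $\widehat{f}^{-1}(V) \cap FP_2(X)$ shows that this open neighborhood is contained in $j_2(U) \cup k_2(U)$, so the latter is a neighborhood of $e$.

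For (ii), given an open neighborhood $W$ of $e$ in $FP_2(X)$, I would set
\[
U_W = \{(x,y) \in X \times X : x^{-1}y \in W \text{ and } y x^{-1} \in W\}.
\]
The inclusion $j_2(U_W) \cup k_2(U_W) \subset W$ is immediate by construction. The sections $U_W[x]$ are neighborhoods of $x$ in $X$ because the maps $y \mapsto x^{-1}y$ and $y \mapsto y x^{-1}$ from $X$ into $FP_2(X)$ are continuous, so $U_W$ is at least a neighbornet. To promote $U_W$ to an element of $\mathscr{U}^{\ast}$, I would observe that the restriction to $X$ of the left (respectively right) quasi-uniformity of the paratopological group $FP(X)$ yields a quasi-uniformity on $X$ inducing the original topology, hence coarser than $\mathscr{U}^{\ast}$; the entourage of this restricted two-sided structure corresponding to $W$ is precisely $U_W$, putting $U_W$ into $\mathscr{U}^{\ast}$.

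The main obstacle is this last step in direction (ii): an arbitrary neighborhood of the diagonal $\Delta_1$ in $X_d \times X$ need not belong to $\mathscr{U}^{\ast}$ (that is exactly the content of the equivalences in Theorem~\ref{t4}), so the argument cannot rely on $U_W$ being just a neighbornet. The point is that $U_W$ is not arbitrary: it is manufactured from the paratopological group operations of $FP(X)$ applied in both orderings, so its membership in $\mathscr{U}^{\ast}$ must be extracted from the very fact that $FP(X)$ is the universal paratopological group over $X$.
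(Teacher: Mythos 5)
First, note that the paper itself gives no proof of this statement: Theorem~\ref{t6} is quoted verbatim from \cite{EN2013}, so there is nothing internal to compare against and your argument has to stand on its own. Your direction (ii) does stand: the relation $U_W=\{(x,y): x^{-1}y\in W,\ yx^{-1}\in W\}$ is exactly an entourage of the restriction to $X$ of the two-sided quasi-uniformity of $FP(X)$; that restriction induces on $X$ a topology no finer than the subspace topology (by continuity of multiplication), hence its join with $\mathscr{U}^{\ast}$ is still compatible, hence it is coarser than $\mathscr{U}^{\ast}$ and $U_W\in\mathscr{U}^{\ast}$. That is the standard and correct way to get $j_2(U_W)\cup k_2(U_W)\subset W$ with $U_W\in\mathscr{U}^{\ast}$, and your closing remark correctly identifies why mere neighbornet-hood would not suffice.

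Direction (i), however, contains the real content of the theorem and your treatment of it has a genuine gap. The opening assertion --- that every $U\in\mathscr{U}^{\ast}$ contains both $\{(x,y): f(x)^{-1}f(y)\in V\}$ and $\{(x,y): f(y)f(x)^{-1}\in V\}$ for a single continuous $f:X\to G$ and a single neighborhood $V$ of $e_G$ --- is precisely the nontrivial statement that the left and right quasi-uniformities induced from paratopological groups generate all of $\mathscr{U}^{\ast}$. It is true, but only via the Graev-type extension of a quasi-pseudometric $d$ with $U_{d,\varepsilon}\subset U$ to an invariant quasi-pseudometric on $FP_a(X)$ (or, equivalently, via the explicit combinatorial neighborhood base at $e$ of $FP(X)$ in terms of $\mathscr{U}^{\ast}$, the nonabelian analogue of Theorem~\ref{t8}); you assert it with no argument, and it is essentially equivalent to what is being proved. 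Second, the ``shrink $V$'' step fails for an arbitrary witness $(G,f,V)$: to get $\widehat{f}^{-1}(V)\cap FP_2(X)\subset j_2(U)\cup k_2(U)$ you must exclude from $V$ not only $f(X)\setminus\{e_G\}$ but also $f(X)^{-1}$, $f(X)f(X)$ and $f(X)^{-1}f(X)^{-1}$ away from $e_G$ (words $x$, $x^{-1}$, $xy$, $x^{-1}y^{-1}$ never lie in $j_2(U)\cup k_2(U)$), and nothing forces this to be possible --- e.g.\ $f$ may send points of $X$ to $e_G$. The parenthetical appeal to the discreteness of $X^{-1}$ in $FP(X)$ is a non sequitur here, since $V$ lives in an arbitrary $G$. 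The fix is to choose $G$ concretely as $FP_a(X)$ with the Graev extension of $d$ in which the distances between $e$ and $X\cup X^{-1}$ are bounded below by $1$, and take $\varepsilon<1$; only then does the case analysis on reduced forms close. Finally, a minor scope issue: the statement is for arbitrary topological $X$, while you restrict to $T_1$ at the outset.
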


Now we can prove the main theorem in this paper.

\begin{theorem}\label{t5}
Let $X$ be a $T_{1}$-space. Then the following are equivalent:
\begin{enumerate}
\item The mapping $i_{2}: (X\bigoplus X_{d}^{-1}\bigoplus\{e\})^{2}\longrightarrow FP_{2}(X)$ is a quotient mapping;

\item The mapping $i_{2}: (X\bigoplus X_{d}^{-1}\bigoplus\{e\})^{2}\longrightarrow AP_{2}(X)$ is a quotient mapping;

\item The mapping $i_{2}: (X\bigoplus X_{d}^{-1}\bigoplus\{e\})^{2}\longrightarrow FP_{2}(X)$ is a closed mapping;

\item The mapping $i_{2}: (X\bigoplus X_{d}^{-1}\bigoplus\{e\})^{2}\longrightarrow AP_{2}(X)$ is a closed mapping.
\end{enumerate}
\end{theorem}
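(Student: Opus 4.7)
The plan is to reduce the four-way equivalence to one substantial implication. By Theorem~\ref{t4} (item~(2)) and Theorem~\ref{t7}, both (1) and (2) are equivalent to the statement that every neighborhood of $\Delta_1$ in $X_d\times X$ belongs to $\mathscr{U}^*$, so (1)$\Leftrightarrow$(2) is immediate. Since every continuous closed surjection is a quotient map and $i_2$ is continuous and surjective in both settings, (3)$\Rightarrow$(1) and (4)$\Rightarrow$(2) are automatic. The real work is therefore (1)$\Rightarrow$(3) and (2)$\Rightarrow$(4); these proceed in parallel, so I sketch the free case in detail and flag the minor substitutions needed for the abelian one.

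For (1)$\Rightarrow$(3), assume the neighborhood-of-$\Delta_1$ condition, let $E$ be a closed subset of $(X\oplus X_d^{-1}\oplus\{e\})^2$, take $w\in FP_2(X)\setminus i_2(E)$, and look for a neighborhood of $w$ in $FP_2(X)$ disjoint from $i_2(E)$. I split on the reduced length of $w$. If $w$ has reduced length $2$, Theorem~\ref{t0} realizes $i_2$ on $i_2^{-1}(FP_2(X)\setminus FP_1(X))$ as a homeomorphism onto the open set $FP_2(X)\setminus FP_1(X)$, so $i_2(E)\cap(FP_2(X)\setminus FP_1(X))$ is closed in that open set and a relative neighborhood of $w$ serves. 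If $w\in X^{-1}$, Theorem~\ref{t9} makes $\{w\}$ a basic open neighborhood. If $w=x\in X\setminus\{e\}$, the only points of $i_2^{-1}(w)$ are $(x,e)$ and $(e,x)$; since $E$ is closed I select an open $V\ni x$ in $X$ with $V\times\{e\}$ and $\{e\}\times V$ both disjoint from $E$, invoke Theorem~\ref{t9} to conclude that $V$ is a neighborhood of $w$ in $FP_2(X)$, and verify $V\cap i_2(E)=\emptyset$ by noting that any element of $V\cap i_2(E)\subseteq X$ would have to be the product of a pair in $E$ of the form $(z,e)$ or $(e,z)$ with $z\in V$.

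The main obstacle is the case $w=e$: here $i_2^{-1}(e)=\{(e,e)\}\cup\{(x,x^{-1}):x\in X\}\cup\{(x^{-1},x):x\in X\}$, all missed by $E$. I form
\[
U=\{(x,y)\in X_d\times X:(x^{-1},y)\notin E\text{ and }(y,x^{-1})\notin E\},
\]
which is an open neighborhood of $\Delta_1$ in $X_d\times X$ since the maps $(x,y)\mapsto(x^{-1},y)$ and $(x,y)\mapsto(y,x^{-1})$ are continuous from $X_d\times X$ into $X_d^{-1}\times X$ and $X\times X_d^{-1}$ respectively. By hypothesis $U\in\mathscr{U}^*$, so Theorem~\ref{t6} gives $j_2(U)\cup k_2(U)$ as a basic neighborhood of $e$ in $FP_2(X)$. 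The crucial disjointness $(j_2(U)\cup k_2(U))\cap i_2(E)=\emptyset$ is checked by reduced-word bookkeeping: a nontrivial element $x^{-1}y$ of $j_2(U)$ has signs $(-,+)$, so any witness in $E$ for $x^{-1}y\in i_2(E)$ must be $(x^{-1},y)$, which is forbidden by the definition of $U$; the diagonal case $x=y$ produces $e$, which lies outside $i_2(E)$ by choice of $w$; and $k_2(U)$ is handled symmetrically using $(y,x^{-1})\notin E$. For (2)$\Rightarrow$(4), the same argument carries over with Theorem~\ref{t3} replacing Theorem~\ref{t0}, Theorem~\ref{t2} replacing Theorem~\ref{t9}, and Theorem~\ref{t8} applied to the constant sequence $\{U_n=U\}_{n\in\mathbb{N}}$ producing $W_1(P)$ in place of $j_2(U)\cup k_2(U)$.
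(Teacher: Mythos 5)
Your overall architecture matches the paper's: dispose of (3)$\Rightarrow$(1), (4)$\Rightarrow$(2) and (1)$\Leftrightarrow$(2) via Theorems~\ref{t4} and~\ref{t7}, and concentrate on (1)$\Rightarrow$(3), (2)$\Rightarrow$(4). Your treatment of the length-two case (via Theorem~\ref{t0}, resp.\ Theorem~\ref{t3}) and of the case $w=e$ (building the entourage $U$ directly and invoking Theorem~\ref{t6}, resp.\ Theorem~\ref{t8}) is correct and in places tidier than the paper's subcase analysis. But there is a genuine gap in the length-one case. Theorem~\ref{t9}, applied to a reduced word of length $n$, produces a neighborhood base at that word \emph{in $FP_n(X)$}; for $w\in X$ or $w\in X^{-1}$ it therefore only tells you that $V$ (resp.\ $\{w\}$) is a neighborhood of $w$ in $FP_1(X)$. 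Since $FP_1(X)$ is a closed but non-open subspace of $FP_2(X)$, a relative neighborhood in $FP_1(X)$ is not automatically a neighborhood in $FP_2(X)$ --- nothing you cite rules out length-two words accumulating at $w$ inside $FP_2(X)$. This is exactly the delicate point that makes the whole theorem nontrivial, so it cannot be waved through by citing Theorem~\ref{t9}.

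The repair is cheap and is what the paper does: for $w\in FP_1(X)\setminus\{e\}$ and $U$ an open neighborhood of $w$ in $X\cup X_d^{-1}$ with $(U\times\{e\})\cup(\{e\}\times U)$ disjoint from $E$, observe that $i_2^{-1}(U)=(U\times\{e\})\cup(\{e\}\times U)$ is open and saturated in $(X\bigoplus X_d^{-1}\bigoplus\{e\})^2$, so hypothesis (1) (quotientness, which you have assumed) makes $U$ open in $FP_2(X)$ and disjoint from $i_2(E)$. The same remark applies to your claim that $\{w\}$ is open in $FP_2(X)$ for $w\in X^{-1}$: discreteness of $X^{-1}$ as a subspace does not give this directly, but $i_2^{-1}(\{w\})=\{(w,e),(e,w)\}$ is open in the domain because $X_d^{-1}$ is discrete, and quotientness finishes it. With that substitution your proof goes through; as written, the key case rests on a theorem that does not say what you need.
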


\begin{proof}
Obviously, we have (3) $\Rightarrow$ (1) and (4) $\Rightarrow$ (2). Moreover, it follows from Theorems~\ref{t7} and ~\ref{t4} that we have (2) $\Rightarrow$ (1). It suffices to show that (1) $\Rightarrow$ (3) and (2) $\Rightarrow$ (4).

{\bf (1) $\Rightarrow$ (3)}. Clearly,  both $FP_{2}(X)\setminus FP_{1}(X)$ and $FP_{1}(X)\setminus\{e\}$ are open in $FP_{2}(X)$. Let $E$ be a closed subset in $(X\bigoplus X_{d}^{-1}\bigoplus\{e\})^{2}$. To show that $i_{2}(E)$ is closed in $FP_{2}(X)$ take $w\in \overline{i_{2}(E)}$.

{\bf Case a1:} $w\in FP_{1}(X)\setminus\{e\}$.

Suppose $w\notin i_2(E)$, then $(w, e)\notin E\ \mbox{and}\ (e, w)\notin E$. Since $E$ is closed, there is open neighborhood $U$ (open in $X\cup X_{d}^{-1}$) of $w$ such that $(U\times \{e\})\cap E= \emptyset$ and $(\{e\}\times U)\cap E= \emptyset$. Obviously, we have $(U\times \{e\})\cup (\{e\}\times U)=i_2^{-1}(U)$. Then $U$ is open in $FP_2(X)$ since $(U\times \{e\})\cup (\{e\}\times U)$ is open in $(X\bigoplus X_{d}^{-1}\bigoplus\{e\})^{2}$ and $i_2$ is a quotient map. Hence $U\cap i_2(E)=\emptyset$, which contradicts $w\in \overline{i_2(E)}$.

{\bf Case a2:} $w\in FP_{2}(X)\setminus FP_{1}(X)$.

Let $w=w_{1}^{\epsilon_{1}}w_{2}^{\epsilon_{2}}$, where $w_{i}\in X$ and $\epsilon_{i}=1$ or -1 ($i=1, 2$). Suppose that $w\not\in i_2(E)$. Then $(w_{1}^{\epsilon_{1}}, w_{2}^{\epsilon_{2}})\not\in E$.

{\bf Subcase a21:} $\epsilon_{1}=\epsilon_{2}=1$.

Since $(w_{1}, w_{2})\not\in E$ and $E$ is closed in $(X\bigoplus X_{d}^{-1}\bigoplus\{e\})^{2}$,  there exist neighborhoods $U$ and $V$ of $w_{1}$ and $w_{2}$ in $X$, respectively, such that
$(U\times V)\cap E=\emptyset$. Therefore, it is easy to see that $UV\cap i_2(E)=\emptyset$. From Theorem~\ref{t9} it follows that $UV$ is a neighborhood of $w$, hence $w\not\in\overline{i_2(E)}$, which is a contradiction.

{\bf Subcase a22:} $\epsilon_{1}=\epsilon_{2}=-1$.

From Theorem~\ref{t9} it follows that $\{w_{1}^{-1}w_{2}^{-1}\}$ is a neighborhood of $w$, then $w\not\in\overline{i_2(E)}$, which is a contradiction.

{\bf Subcase a23:} $\epsilon_{1}\neq\epsilon_{2}$.

Without loss of generality, we may assume that $\epsilon_{1}=1$ and $\epsilon_{2}=-1$. Then since $(w_{1}, w_{2}^{-1})\not\in E$ and $E$ is closed in $(X\bigoplus X_{d}^{-1}\bigoplus\{e\})^{2}$,  there exists a neighborhood $U$ of $w_{1}$ in $X$ such that
$(U\times \{w_{2}^{-1}\})\cap E=\emptyset$ and $w_{2}\not\in U$. (This is possible since $X$ is $T_{1}$.) Obviously, $U w_{2}^{-1}\subset FP_{2}(X)\setminus FP_{1}(X)$. Therefore, it is easy to see that $U w_{2}^{-1}\cap i_2(E)=\emptyset$. From Theorem~\ref{t9} it follows that $U w_{2}^{-1}$ is a neighborhood of $w$, hence $w\not\in\overline{i_2(E)}$, which is a contradiction.

Therefore, we have $w\in i_2(E)$.

{\bf Case a3:} $w=e$.

Suppose that $e\not\in i_{2}(E)$. Then $E\cap (\Delta_{1}\cup \Delta_{2}\cup\{(e, e)\})=\emptyset$. For any $x\in X$, since $E$ does not contain points $(x^{-1}, x)$ and $(x, x^{-1})$, there exists an open neighborhood $U(x)$ of $x$ in $X$ such that $(\{x^{-1}\}\times U(x))\cap E=\emptyset$ and $(U(x)\times \{x^{-1}\})\cap E=\emptyset$. Let $U=\bigcup_{x\in X}(\{x^{-1}\}\times U(x))$ and $V=\bigcup_{x\in X}(U(x)\times \{x^{-1}\})$. Then $U\cap E=\emptyset$ and $V\cap E=\emptyset$. Let $W=U\cup V\cup\{e\}\times\{e\}$. Then $W$ is open in $(X\bigoplus X_{d}^{-1}\bigoplus\{e\})^{2}$ by (2) of Theorem~\ref{t4}. Obviously, we have $W\cap E=\emptyset$. It is easy to see that $i_2^{-1}(i_2(W))=W$, then $i_2(W)$ is open since $i_2$ is a quotient map. Hence $i_2(W)\cap i_2(E)=\emptyset$, this is a contradiction.

\smallskip

{\bf (2) $\Rightarrow$ (4)}. ({\bf Note}: The proof is almost similar to (1) $\Rightarrow$ (3). However, we give out the proof for the convenience for readers.) Clearly, both $AP_{2}(X)\setminus AP_{1}(X)$ and $AP_{1}(X)\setminus\{e\}$ are open in $AP_{2}(X)$. Let $E$ be a closed subset in $(X\bigoplus -X_{d}\bigoplus\{e\})^{2}$. To show that $i_{2}(E)$ is closed in $AP_{2}(X)$ take $w\in \overline{i_{2}(E)}$.

{\bf Case b1:} $w\in AP_{1}(X)\setminus\{e\}$.

Suppose $w\notin i_2(E)$, then $(w, e)\notin E\ \mbox{and}\ (e, w)\notin E$. Since $E$ is closed, there is open neighborhood $U$ (open in $X\cup -X_{d}$) of $w$ such that $(U\times \{e\})\cap E= \emptyset$ and $(\{e\}\times U)\cap E= \emptyset$. Obviously, we have $(U\times \{e\})\cup (\{e\}\times U)=i_2^{-1}(U)$. Then $U$ is open in $AP_2(X)$ since $(U\times \{e\})\cup (\{e\}\times U)$ is open in $(X\bigoplus -X_{d}\bigoplus\{e\})^{2}$ and $i_2$ is a quotient map by Theorems ~\ref{t7} and~\ref{t4}. Then $U\cap i_2(E)=\emptyset$, that contradicts $w\in \overline{i_2(E)}$.

{\bf Case b2:} $w\in AP_{2}(X)\setminus AP_{1}(X)$.

Let $w=\epsilon_{1}w_{1}+\epsilon_{2}w_{2}$, where $w_{i}\in X$ and $\epsilon_{i}=1$ or -1 ($i=1, 2$). Suppose that $w\not\in i_2(E)$. Then $(\epsilon_{1}w_{1}, \epsilon_{2}w_{2})\not\in E$ and $(\epsilon_{2}w_{2}, \epsilon_{1}w_{1})\not\in E$.

{\bf Subcase b21:} $\epsilon_{1}=\epsilon_{2}=1$.

Since $\{(w_{1}, w_{2}), (w_{2}, w_{1})\}\not\in E$ and $E$ is closed in $(X\bigoplus -X_{d}\bigoplus\{e\})^{2}$,  there exist neighborhoods $U$ and $V$ of $w_{1}$ and $w_{2}$ in $X$, respectively, such that
$(U\times V\cup V\times U)\cap E=\emptyset$. Therefore, it is easy to see that $(U+V)\cap i_2(E)=\emptyset$. From Theorem~\ref{t2} it follows that $U+V$ is a neighborhood of $w$, hence $w\not\in\overline{i_2(E)}$, which is a contradiction.

{\bf Subcase b22:} $\epsilon_{1}=\epsilon_{2}=-1$.

From Theorem~\ref{t9} it follows that $\{-w_{1}-w_{2}\}$ is a neighborhood of $w$, then $w\not\in\overline{i_2(E)}$, which is a contradiction.

{\bf Subcase b23:} $\epsilon_{1}\neq\epsilon_{2}$.

Without loss of generality, we may assume that $\epsilon_{1}=1$ and $\epsilon_{2}=-1$. Then since $$\{(w_{1}, -w_{2}), (-w_{2}, w_{1})\}\not\in E$$ and $E$ is closed in $(X\bigoplus -X_{d}\bigoplus\{e\})^{2}$,  there exists a neighborhood $U$ of $w_{1}$ in $X$ such that
$(U\times \{w_{2}^{-1}\}\cup \{w_{2}^{-1}\}\times U)\cap E=\emptyset$ and $w_{2}\not\in U$. (This is possible since $X$ is $T_{1}$.) Obviously, $U-w_{2}\subset AP_{2}(X)\setminus AP_{1}(X)$. Therefore, it is easy to see that $(U-w_{2})\cap i_2(E)=\emptyset$. From Theorem~\ref{t2} it follows that $U-w_{2}$ is a neighborhood of $w$, hence $w\not\in\overline{i_2(E)}$, which is a contradiction.

Therefore, we have $w\in i_2(E)$.

{\bf Case b3:} $w=e$.

Suppose that $e\not\in i_{2}(E)$. Then $E\cap (\Delta_{1}\cup \Delta_{2}\cup\{(e, e)\})=\emptyset$. For any $x\in X$, since $E$ does not contain points $(-x, x)$ and $(x, -x)$, there exists an open neighborhood $U(x)$ of $x$ in $X$ such that $(\{-x\}\times U(x))\cap E=\emptyset$ and $(U(x)\times \{-x\})\cap E=\emptyset$. Let $U=\bigcup_{x\in X}(\{-x\}\times U(x))$ and $V=\bigcup_{x\in X}(U(x)\times \{-x\})$. Then $U\cap E=\emptyset$ and $V\cap E=\emptyset$. Let $W=U\cup V\cup\{e\}\times\{e\}$. Then $W$ is open in $(X\bigoplus -X_{d}\bigoplus\{e\})^{2}$ by Theorem~\ref{t4}. Obviously, we have $W\cap E=\emptyset$. It is easy to see that $i_2^{-1}(i_2(W))=W$, then $i_2(W)$ is open in $AP_2(X)$ since $i_2$ is a quotient map by Theorems ~\ref{t7} and~\ref{t4}. Hence $i_2(W)\cap i_2(E)=\emptyset$, which is a contradiction.

\end{proof}

\begin{proposition}
Let $X$ be a $T_1$-space. Then, for some $n\geq 3$, $i_n:  (X\bigoplus X_{d}^{-1}\bigoplus\{e\})^{n}\rightarrow FP_{n}(X)$ is a closed map if and only if $X$ is discrete.
\end{proposition}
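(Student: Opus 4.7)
The plan is to prove each direction separately.

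\textbf{Sufficiency.} If $X$ is discrete then every factor in $X \bigoplus X_{d}^{-1} \bigoplus \{e\}$ is discrete, so the domain is discrete. Moreover, $FP_{n}(X)$ is discrete: the discrete topology on the abstract group $FP_{a}(X)$ is itself a paratopological group topology for which the inclusion $X \hookrightarrow FP_{a}(X)$ is trivially continuous, and since the free paratopological group topology is the finest such topology, $FP(X)$ must be discrete, and so is its subspace $FP_{n}(X)$. Any map between discrete spaces is trivially closed, so $i_{n}$ is closed.

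\textbf{Necessity.} I argue by contrapositive: assuming $X$ is not discrete, I construct a closed subset of the domain whose image under $i_{n}$ fails to be closed. Pick a non-isolated point $x_{0} \in X$ and, for each open neighborhood $U$ of $x_{0}$, a point $x_{U} \in U \setminus \{x_{0}\}$. Then $x_{U} \to x_{0}$ along the neighborhood filter of $x_0$, while $S := \{x_{U} : U\}$ does not contain $x_{0}$. Set
$$E = \{(x_{U},\, x_{U}^{-1},\, x_{U},\, e,\, \ldots,\, e) : U\} \subset (X \bigoplus X_{d}^{-1} \bigoplus \{e\})^{n},$$
padded with $n-3$ copies of $e$; by the cancellation $x_{U} x_{U}^{-1} = e$ we have $i_{n}(E) = S$.

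Two claims finish the argument. First, $E$ is closed in the domain: since $X \times X_{d}^{-1} \times X \times \{e\}^{n-3}$ is clopen in the product, it suffices to check closedness there. For a point $(a, b, c, e, \ldots, e)$ outside $E$, if $b$ is not of the form $x_{U}^{-1}$ then $X \times \{b\} \times X \times \{e\}^{n-3}$ is an open neighborhood disjoint from $E$; otherwise $b = x_{U_{0}}^{-1}$ for some $U_{0}$ and $(a, c) \neq (x_{U_{0}}, x_{U_{0}})$, so we excise $x_{U_{0}}$ from whichever of the first or third coordinate differs, using the $T_{1}$ hypothesis. Second, $S$ is not closed in $FP_{n}(X)$: the natural map $X \to FP(X)$ is continuous by the definition of $FP(X)$, so $x_{U} \to x_{0}$ in $FP(X)$ and hence in the subspace $FP_{n}(X)$, exhibiting $x_{0}$ as a limit point of $S$ that lies outside $S$. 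This contradicts closedness of $i_{n}$.

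The main obstacle is the clean verification that $E$ is closed: it hinges on the discreteness of the middle factor $X_{d}^{-1}$ to enable the slice-by-slice argument on the second coordinate, and on the $T_{1}$ axiom (via Lemma~\ref{l1}) to perform the separations in the remaining coordinates. The role of the hypothesis $n \geq 3$ is transparent from the construction: one needs three coordinates simultaneously to host a cancelling pair $x_{U} x_{U}^{-1}$ and a surviving letter $x_{U}$ whose image lands in $S$. For $n = 2$ this trick is unavailable, which is consistent with the fact established in Theorem~\ref{t5} that $i_{2}$ can be closed without $X$ being discrete.
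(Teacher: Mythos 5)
Your proof is correct and takes essentially the same approach as the paper: both directions match, and in the necessity argument the paper likewise builds a closed discrete set of triples carrying a cancelling pair together with a surviving letter (the paper uses $A=\{(x_\alpha,x_\alpha,x_\alpha^{-1},e,\dots,e):x_\alpha\in X\setminus\{x\}\}$ with image $X\setminus\{x\}$, relying on the discreteness of $X_{d}^{-1}$ and the $T_1$ axiom exactly as you do). Your version merely indexes the set by the neighborhood filter of a non-isolated point and spells out the closedness verification that the paper leaves implicit.
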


\begin{proof}
If $X$ is discrete, then $FP(X)$ is discrete, it is easy to see that each $i_n$ is a closed map.

\smallskip

Let $i_n$ be a closed map for some $n\geq 3$. Since $X$ is $T_1$, then $X^{-1}$ is discrete. Suppose that $X$ is not discrete, then there exists $x\in X$ such that $x\in \overline {X\setminus \{x\}}$.
Let $$A=\{(x_\alpha, x_\alpha, x_\alpha^{-1}, e, \cdots, e)\in(X\bigoplus X_{d}^{-1}\bigoplus\{e\})^n: x_\alpha \in X\setminus \{x\}\}.$$ Then $A$ is a closed discrete subset of $(X\bigoplus X_{d}^{-1}\bigoplus\{e\})^n$, and therefore, $i_n(A)=X\setminus \{x\}$ is closed discrete subset, which is a contradiction. Hence $X$ is discrete.
\end{proof}

{\bf Note} Therefore, we can improve all results in \cite[Sections 4 and 5]{EN2013} from quotient mappings to closed mappings. For example, we have the following proposition.

\begin{proposition}\label{p0}
The mapping $i_{2}$ is a closed mapping for any countable $T_1$-space. In particular, the mapping $i_{2}$ is a closed mapping for any countable subspace of real line $\mathbb{R}$.
\end{proposition}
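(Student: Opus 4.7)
The plan is to apply Theorem~\ref{t5} in tandem with Theorem~\ref{t4}: for a $T_1$-space, $i_2$ is closed on $FP_2(X)$ iff it is a quotient mapping, which by Theorem~\ref{t4} is equivalent to condition~(5), namely that for every assignment $x\mapsto N_x$ of open neighborhoods there is a refinement $x\mapsto M_x$ with $\bigcup_{y\in M_x}M_y\subset N_x$ for every $x\in X$. So it suffices to verify (5) for an arbitrary countable $T_1$-space.

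I would enumerate $X=\{x_n:n\in\mathbb{N}\}$ and define $M_{x_n}$ by induction on $n$. At stage $n$, two finite pieces of previous data govern the construction: the set $S_n=\{x_k:k<n,\ M_{x_k}\not\subset N_{x_n}\}$ of ``bad'' predecessors whose $M$-set was already made too big to fit inside $N_{x_n}$, and the set $I_n=\{k<n:x_n\in M_{x_k}\}$ of predecessors whose earlier definition already forces $M_{x_n}\subset N_{x_k}$. Since $X$ is $T_1$ and $S_n$ is a finite set not containing $x_n$, the complement $X\setminus S_n$ is an open neighborhood of $x_n$, and I would set
$$M_{x_n}\;=\;(X\setminus S_n)\,\cap\,N_{x_n}\,\cap\,\bigcap_{k\in I_n}N_{x_k},$$
a finite intersection of neighborhoods of $x_n$, hence itself a neighborhood of $x_n$.

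To verify $\bigcup_{y\in M_{x_n}}M_y\subset N_{x_n}$, I would split on the position of an arbitrary $y=x_k\in M_{x_n}$ relative to $n$: if $k<n$, then $x_k\in M_{x_n}\subset X\setminus S_n$, so by the definition of $S_n$ we have $M_{x_k}\subset N_{x_n}$; if $k=n$, then $M_{x_n}\subset N_{x_n}$ by construction; and if $k>n$, then at the later stage $k$ the index $n$ lies in $I_k$ (because $x_k\in M_{x_n}$), so $N_{x_n}$ is one of the intersecting factors in the definition of $M_{x_k}$, giving $M_{x_k}\subset N_{x_n}$. The ``in particular'' clause is immediate, since every subspace of $\mathbb{R}$ is $T_1$.

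The delicate point I expect to watch is the temporal asymmetry: when $M_{x_k}$ was fixed at stage $k<n$, the target $N_{x_n}$ was not yet visible, and the only remedy is to forbid $x_k$ from entering $M_{x_n}$ once $M_{x_k}$ has turned out to be too large for $N_{x_n}$ — which is exactly the job of $S_n$, and is exactly where the $T_1$ hypothesis is used. Everything else is bookkeeping, so I anticipate no further obstacle beyond keeping the bookkeeping of $S_n$ and $I_n$ consistent.
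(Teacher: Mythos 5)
Your proof is correct. The paper itself gives no argument for this proposition: it simply observes (in the ``Note'' preceding it) that all the quotient-mapping results of \cite[Sections 4 and 5]{EN2013} --- including the fact that $i_{2}$ is a quotient mapping for every countable $T_{1}$-space --- upgrade to closed-mapping statements via Theorem~\ref{t5}. You perform the same reduction (closed $\Leftrightarrow$ quotient by Theorem~\ref{t5}, then quotient $\Leftrightarrow$ condition (5) by Theorem~\ref{t4}), but then, instead of citing the countable case from \cite{EN2013}, you verify condition (5) directly by the inductive construction with the sets $S_{n}$ and $I_{n}$; this is essentially the classical argument that every neighbornet of a countable space is normal (cf.\ \cite{F1982}), and it makes the proposition self-contained where the paper relies on an external citation. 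The construction checks out: the recursion is well-founded since $S_{n}$ and $I_{n}$ depend only on $M_{x_{k}}$ for $k<n$; each $M_{x_{n}}$ is a neighborhood of $x_{n}$ because $S_{n}$ is a finite set missing $x_{n}$ (here $T_{1}$ is used) and because $x_{n}\in M_{x_{k}}\subset N_{x_{k}}$ for $k\in I_{n}$; and the three-way case split on the position of $k$ relative to $n$ covers all of $\bigcup_{y\in M_{x_{n}}}M_{y}$. Two small points worth making explicit if you write this up: you should fix an injective enumeration of $X$ (so that $x_{n}\notin S_{n}$ and the index of a point $y\in M_{x_{n}}$ is unambiguous), and you should note that one may assume each $N_{x}$ is open --- condition (5) for open neighborhood assignments implies it for arbitrary ones by passing to interiors --- since the openness of $N_{x_{k}}$ is what makes it a neighborhood of $x_{n}$ when $k\in I_{n}$.
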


\begin{corollary}
$FP_{2}(\mathbb{Q})$ and $AP_{2}(\mathbb{Q})$ are Fr$\acute{e}$chet, where $\mathbb{Q}$ is the rational number of real line $\mathbb{R}$.
\end{corollary}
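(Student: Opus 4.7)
The plan is to deduce the corollary from Proposition~\ref{p0} together with the classical fact that a closed continuous image of a metrizable space is Fr\'echet--Urysohn. Since $\mathbb{Q}$ is a countable $T_{1}$-space, Proposition~\ref{p0} says that the multiplication mapping
$$i_{2}:(\mathbb{Q}\oplus \mathbb{Q}_{d}^{-1}\oplus\{e\})^{2}\longrightarrow FP_{2}(\mathbb{Q})$$
is closed, and by the equivalence (3)$\Leftrightarrow$(4) in Theorem~\ref{t5} the corresponding map onto $AP_{2}(\mathbb{Q})$ is closed as well. Both maps are surjective by the very definition of $FP_{2}$ and $AP_{2}$, so the Fr\'echet property of the target will follow as soon as we know that (i) the domain is metrizable and (ii) closed images of metric spaces are Fr\'echet.

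First I would verify metrizability of the domain. The space $\mathbb{Q}$ is metrizable as a subspace of $\mathbb{R}$; the space $\mathbb{Q}_{d}^{-1}$ (and similarly $-\mathbb{Q}_{d}$ in the abelian case) is discrete and hence metrizable; the one-point set $\{e\}$ is trivially metrizable. Since $\mathbb{Q}$ is countable, the topological sum $\mathbb{Q}\oplus \mathbb{Q}_{d}^{-1}\oplus\{e\}$ is a countable sum of metrizable spaces, hence metrizable, and its square is metrizable as well. Thus $i_{2}$ is a closed continuous surjection from a metrizable space.

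Then I would invoke the classical theorem (a form of Lashnev's theorem) that any closed continuous image of a metrizable space is Fr\'echet--Urysohn. Applying this to $i_{2}$ in each case gives that $FP_{2}(\mathbb{Q})$ and $AP_{2}(\mathbb{Q})$ are Fr\'echet, which is the desired conclusion.

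There is essentially no difficulty beyond assembling the two ingredients; the heavy lifting has been absorbed into Proposition~\ref{p0}. The only point that requires a moment of care is confirming that the closed-image-of-metric theorem actually yields the full Fr\'echet property (and not merely sequentiality), so I would state it explicitly and, if needed, cite the reference \cite{F1982} already in the paper's bibliography. One could alternatively argue directly: given $w\in\overline{A}\subseteq FP_{2}(\mathbb{Q})$ with $A\subseteq FP_{2}(\mathbb{Q})$, lift to the metric domain, pick a convergent sequence in $i_{2}^{-1}(A\cup\{w\})$ using the fact that closed maps from first-countable spaces preserve the Fr\'echet property of fibers-plus-boundary, and push forward; but invoking the general theorem is cleaner.
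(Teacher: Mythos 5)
Your proof is correct and takes essentially the same route as the paper: both deduce from Proposition~\ref{p0} that $i_{2}$ is closed and then transfer the Fr\'echet property of the (metrizable, hence Fr\'echet) domain $(X\bigoplus X_{d}^{-1}\bigoplus\{e\})^{2}$ to the image, the paper by citing preservation of the Fr\'echet property under closed mappings and you by the equivalent appeal to Lashnev-type preservation for closed images of metrizable spaces. No gap; nothing further is needed.
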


\begin{proof}
By Proposition~\ref{p0}, $i_{2}$ is a closed mapping. Then $FP_{2}(\mathbb{Q})$ and $AP_{2}(\mathbb{Q})$ are Fr$\acute{e}$chet since $(X\bigoplus X_{d}^{-1}\bigoplus\{e\})^{2}$ is Fr$\acute{e}$chet and closed mappings preserve the property of Fr$\acute{e}$chet.
\end{proof}

By \cite[Proposition 6.26]{F1982}, we also have the following proposition.

\begin{proposition}\label{p1}
For arbitrary compact first-countable Hausdorff space $X$, the mapping $i_{2}$ is closed if and only if $X$ is countable.
\end{proposition}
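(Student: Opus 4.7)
The plan is to reduce the statement to a purely topological criterion about the finest quasi-uniformity on $X$ and then invoke the cited result of Fletcher and Lindgren.

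For sufficiency, assume $X$ is countable. Since $X$ is Hausdorff, it is $T_{1}$, so Proposition~\ref{p0} applies directly and yields that $i_{2}$ is closed. No extra work is needed here; compactness and first-countability are not even used in this direction.

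For necessity, assume $i_{2}$ is closed. By Theorem~\ref{t5} this is equivalent to $i_{2}$ being a quotient mapping. Applying Theorem~\ref{t4} (the equivalence of (1) and (4), say), this is in turn equivalent to the statement that the finest quasi-uniformity $\mathscr{U}^{\ast}$ on $X$ coincides with the collection of all neighborhoods of the diagonal $\Delta_{1}$ in $X_{d}\times X$, equivalently, that every neighbornet of $X$ is normal (condition (3) of Theorem~\ref{t4}). At this point the problem has been reduced to a property of $X$ alone. I would then invoke \cite[Proposition 6.26]{F1982}, which asserts that a compact first-countable Hausdorff space having every neighbornet normal (equivalently, whose finest quasi-uniformity consists of all neighborhoods of the diagonal) must be countable. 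This yields $X$ countable and completes the proof.

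The main obstacle is purely bibliographic rather than mathematical: one has to match up the language of Theorem~\ref{t4} with the statement of \cite[Proposition 6.26]{F1982}, since the latter is phrased in terms of neighbornets while our criterion is phrased in terms of neighborhoods of $\Delta_{1}$ in $X_{d}\times X$. The translation is routine — a neighbornet of $X$ is exactly a map $N\colon X\to 2^{X}$ with $x\in \mathrm{int}\,N(x)$, whose associated relation $\bigcup_{x\in X}\{x\}\times N(x)$ is a neighborhood of $\Delta_{1}$ in $X_{d}\times X$, and conversely — so the chain of equivalences in Theorem~\ref{t4} already records this correspondence explicitly. Once this is in place, the proof is a two-line invocation and there is no calculation to carry out.
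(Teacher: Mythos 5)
Your argument is correct and is essentially the paper's own: the paper proves this proposition simply by combining Theorem~\ref{t5} (closed $\Leftrightarrow$ quotient), the equivalences of Theorem~\ref{t4}, and the cited \cite[Proposition 6.26]{F1982}, exactly as you do, with the countable direction also covered by Proposition~\ref{p0}. No gaps; your reduction matches the intended proof.
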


{\bf Acknowledgements}. We wish to thank
the reviewers for the detailed list of corrections, suggestions to the paper, and all her/his efforts
in order to improve the paper.

\end{document}